\theoremstyle{definition}
\newtheorem{theorem}{Theorem}[section]
\newtheorem{lemma}[theorem]{Lemma}
\newtheorem{corollary}[theorem]{Corollary}
\newtheorem{remark}[theorem]{Remark}
\numberwithin{equation}{section}
\newcommand{\Rmnum}[1]{\expandafter\@slowromancap\romannumeral #1@}
\begin{document}
	
	\title{\uppercase{Michael-Simon Sobolev inequalities 
			in Euclidean space 
	}}
	
	\author{
		Yuting Wu 
		\thanks{School of Mathematical Sciences, East China Normal University, 500 Dongchuan Road, Shanghai 200241,
			P. R. of China, E-mail address: 52215500001@stu.ecnu.edu.cn. }
		\and
		Chengyang Yi 
		\thanks{School of Mathematical Sciences, East China Normal University, 500 Dongchuan Road, Shanghai 200241,
			P. R. of China, E-mail address: 52195500013@stu.ecnu.edu.cn. }	
	}
	
	\date{}
	
	\maketitle
	
	\begin{abstract}
		Inspired by \cite{Bre, DP}, we prove Michael-Simon type inequalities for smooth symmetric uniformly positive define $\left( 0,2\right)$-tensor fields on compact submanifolds in Euclidean space by the Alexandrov-Bakelman-Pucci (ABP) method.
	\end{abstract}
	
	\section{Introduction}
	
	Many authors have been studied extensive researches about optimal transport techniques \cite{CV, BE, Wang, KM} and the Alexandrov-Bakelman-Pucci (ABP) method \cite{Bren, YW, Dong22, Joh} for inequalities. 
	We remark that optimal transport techniques also apply to more general metric measure spaces (see \cite{CM}).
    Meanwhile, the ABP method has been considered in many literatures, 
	such as Brendle \cite{Bre} proved a Sobolev inequality for submanifolds of arbitrary dimension and codimension in Euclidean space by the ABP method in 2019;
    Soon, Brendle \cite{Bren} obtained a Sobolev inequality in Riemannian manifolds with nonnegative curvature;
    In recent years, a large number of experts \cite{LL, Wang, Joh, Dong22, Ma, DP} have made further investigations about the Sobolev inequality.
      
    In 2018, D. Serre \cite{DS} proved a Michael-Simon Sobolev inequality involving a positive symmetric matrix-valued function $A$ on a smooth bounded convex domain in $\mathbb{R}^{n}$ and showed some applications to fluid dynamics by optimal transport techniques. D. Pham \cite{DP} reorganized some of the conclusions of \cite{DS}. The open unit ball in $\mathbb{R}^{m}$ is denoted by $B^{m}$.	
    \begin{theorem}	\label{th:A}
    	($\cite{DS}$, Theorem 2.3 and Proposition 2.2
    	or $\cite{DP}$, Theorem 1.7)
    	Let $n\in\mathbb{N}$ and 
    	$\Omega$ be a smooth bounded convex domain in $\mathbb{R}^{n}$. 
    	If $A$ is a smooth uniformly positive symmetric matrix-valued function on $\overline{\Omega}$
    	(the closure of $\Omega$), then
    	\begin{equation*}   \label{eq:A}
    		\int_{\Omega}\left|\mathrm{div}A\left( x\right)  \right| dx
    		+\int_{\partial\Omega}\left| A\left( \nu\right) \right|d\sigma\left( x\right) 
    		\geq n\left| B^{n}\right|^{\frac{1}{n}} 
    		\left( \int_{\Omega}\mathrm{det}\left( A\left( x\right) \right)^{\frac{1}{n-1}}dx\right)  ^{\frac{n-1}{n}},
    	\end{equation*}
    	where $\nu$ is the unit outer normal vector field on $\partial\Omega$. The equality holds if and only if $A$ is divergence-free, there is a smooth convex function $u$ on $\Omega$
    	such that $\nabla u\left( \Omega\right)$ is a ball centered at the origin and 
    	$A=\left(\mathrm{cof} D^{2}u\right)$, 
    	the cofactor matrix of $D^{2}u$.
    \end{theorem}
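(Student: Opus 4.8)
The plan is to establish the inequality by the Alexandrov--Bakelman--Pucci method, in the spirit of Brendle's proof of the Michael--Simon inequality \cite{Bre}, carrying the symmetric positive matrix $A$ through as the coefficient matrix of a divergence-form elliptic operator. Write $f:=\det\!\big(A(x)\big)^{\frac{1}{n-1}}$; since $A$ is uniformly positive definite on $\overline{\Omega}$, $f$ is smooth and bounded below by a positive constant, and $|A\nu|=\sqrt{\langle A^{2}\nu,\nu\rangle}$ is smooth and positive on $\partial\Omega$. Set
\[
c:=\frac{\displaystyle\int_{\Omega}\big|\mathrm{div}\,A(x)\big|\,dx+\int_{\partial\Omega}\big|A(x)\nu(x)\big|\,d\sigma(x)}{\displaystyle\int_{\Omega}f(x)\,dx}.
\]
It suffices to prove $\big(\tfrac{c}{n}\big)^{n}\int_{\Omega}f\ge|B^{n}|$: multiplying by $\int_{\Omega}f$ and rearranging yields exactly the asserted estimate, because $c\int_{\Omega}f$ is the left-hand side and $\int_{\Omega}f=\int_{\Omega}\det(A)^{\frac{1}{n-1}}$. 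The first step is to solve the conormal Neumann boundary value problem
\[
\mathrm{div}\,(A\nabla u)=c\,f-\big|\mathrm{div}\,A\big|\ \ \text{in }\Omega,\qquad \langle A\nabla u,\nu\rangle=\big|A\nu\big|\ \ \text{on }\partial\Omega.
\]
The interior datum is Lipschitz, the boundary datum is smooth, and the compatibility condition $\int_{\Omega}(cf-|\mathrm{div}\,A|)=\int_{\partial\Omega}|A\nu|$ holds precisely by the definition of $c$; hence, by $L^{p}$ and Schauder theory for the self-adjoint operator $\mathrm{div}\,(A\nabla\,\cdot\,)$, there is a solution $u\in C^{2,\alpha}(\overline{\Omega})$, unique up to an additive constant.

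The heart of the argument is a transport step. For each $y\in B^{n}$ I would consider $x\mapsto u(x)-\langle x,y\rangle$ on the compact set $\overline{\Omega}$ and let $\bar x$ be a minimum point; the claim is $\bar x\in\Omega$. Indeed, if $\bar x\in\partial\Omega$, then the inward directions at $\bar x$ fill the half-space $\{w:\langle w,\nu(\bar x)\rangle\le0\}$, so minimality forces $\nabla u(\bar x)-y=-\lambda\,\nu(\bar x)$ for some $\lambda\ge0$, and pairing with $A(\bar x)\nu(\bar x)$ and using $A(\bar x)>0$, $\lambda\ge0$ gives
\[
\big|A(\bar x)\nu(\bar x)\big|=\langle A(\bar x)\nabla u(\bar x),\nu(\bar x)\rangle\le\langle A(\bar x)y,\nu(\bar x)\rangle\le|y|\,\big|A(\bar x)\nu(\bar x)\big|,
\]
i.e. $|y|\ge1$, contradicting $y\in B^{n}$. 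Thus $\bar x$ is interior, $\nabla u(\bar x)=y$, the second-order condition gives $D^{2}u(\bar x)\ge0$, and $|\nabla u(\bar x)|=|y|<1$. Therefore, with $U:=\{x\in\Omega:\ D^{2}u(x)\ge0,\ |\nabla u(x)|<1\}$ we have $B^{n}\subseteq\nabla u(U)$.

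Next I would apply the area formula to the $C^{1}$ map $\nabla u$ and bound the Jacobian by the arithmetic--geometric mean inequality. On $U$ the equation gives, pointwise,
\[
\mathrm{tr}\big(A\,D^{2}u\big)=\mathrm{div}\,(A\nabla u)-\langle\mathrm{div}\,A,\nabla u\rangle=c f-\big|\mathrm{div}\,A\big|-\langle\mathrm{div}\,A,\nabla u\rangle\le c f,
\]
using $|\nabla u|<1$. Since $D^{2}u\ge0$ and $A>0$, the matrix $A^{1/2}D^{2}u\,A^{1/2}$ is positive semidefinite with trace $\mathrm{tr}(A\,D^{2}u)$ and determinant $\det A\cdot\det D^{2}u$, so
\[
\det D^{2}u\le\frac{1}{\det A}\Big(\frac{\mathrm{tr}(A\,D^{2}u)}{n}\Big)^{n}\le\frac{1}{\det A}\Big(\frac{c f}{n}\Big)^{n}=\Big(\frac{c}{n}\Big)^{n}\frac{f^{n}}{\det A}=\Big(\frac{c}{n}\Big)^{n}f,
\]
the last equality being $f^{n}/\det A=\det(A)^{\frac{n}{n-1}}/\det A=\det(A)^{\frac{1}{n-1}}=f$. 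Hence
\[
|B^{n}|\le|\nabla u(U)|\le\int_{U}\big|\det D^{2}u\big|\,dx=\int_{U}\det D^{2}u\,dx\le\Big(\frac{c}{n}\Big)^{n}\int_{U}f\le\Big(\frac{c}{n}\Big)^{n}\int_{\Omega}f,
\]
which is the required inequality.

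For the equality case I would trace the chain backwards: equality forces $\nabla u(U)=B^{n}$ up to a null set with $\nabla u$ a.e.\ injective on $U$, $\det D^{2}u=(c/n)^{n}f$ a.e., and $|\Omega\setminus U|=0$ (using $f>0$); equality in the arithmetic--geometric mean step means $D^{2}u=\mu\,A^{-1}$ with $\mu=\tfrac{c}{n}f$; and equality $\mathrm{tr}(A\,D^{2}u)=cf$ together with $|\nabla u|<1$ on $U$ forces $\mathrm{div}\,A\equiv0$. From $D^{2}u=\tfrac{c}{n}\det(A)^{\frac{1}{n-1}}A^{-1}$ one computes $\mathrm{cof}(D^{2}u)=(c/n)^{n-1}A$; replacing $u$ by $\tfrac{n}{c}u$ keeps $u$ smooth and convex, keeps $\nabla u(\Omega)$ a ball centered at the origin, and rescales $\mathrm{cof}(D^{2}u)$ to equal $A$, so $A=\mathrm{cof}(D^{2}u)$ is divergence-free and $\nabla u(\Omega)$ is a ball about the origin. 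Conversely, if $A=\mathrm{cof}(D^{2}u)$ for such a $u$, then $\mathrm{div}\,A=0$ by the Piola identity, $\det(A)^{\frac{1}{n-1}}=\det D^{2}u$, $\int_{\Omega}\det D^{2}u=|\nabla u(\Omega)|$, and $\int_{\partial\Omega}|A\nu|\,d\sigma=\mathcal H^{n-1}\big(\nabla u(\partial\Omega)\big)$ via the surface change-of-variables identity $d\sigma_{\nabla u(\Omega)}=|\mathrm{cof}(D^{2}u)\,\nu|\,d\sigma_{\partial\Omega}$, whence both sides of the inequality coincide. I expect the bulk of the work to lie in this equality analysis — in particular the normalization of the potential and the surface change-of-variables formula — while the only delicate technical point on the inequality side is the measure-theoretic bookkeeping for the area formula, which is routine once $u\in C^{2,\alpha}(\overline{\Omega})$. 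The crucial structural observation that makes the whole scheme go through in the matrix case is the choice of the \emph{conormal} boundary condition $\langle A\nabla u,\nu\rangle=|A\nu|$: it is exactly what rules out boundary minima while keeping the Neumann problem self-adjoint and its data compatible.
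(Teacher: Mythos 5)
Your proof is correct and is essentially the same ABP scheme the paper itself runs (in higher codimension) for Theorems 1.3 and 1.4 and that \cite{DP} uses for Theorem 1.2: normalize, solve the conormal Neumann problem $\mathrm{div}(A\nabla u)=cf-|\mathrm{div}A|$ with $\langle A\nabla u,\nu\rangle=|A\nu|$, rule out boundary minima via the positivity of $A$, bound $\det D^{2}u$ by the matrix arithmetic--geometric mean inequality (Lemma 2.1), and apply the area formula. Worth noting: your argument nowhere uses convexity of $\Omega$, so it actually establishes the stronger Theorem 1.2, whereas the cited Theorem 1.1 was originally proved by optimal transport, where convexity is essential.
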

    Until recently D. Pham \cite{DP} extended Theorem 1.1 to a smooth bounded domain in $\mathbb{R}^{n}$ without the convexity condition of $\Omega$ by the ABP method \cite{Bre, Cab, Cabr}.
    \begin{theorem}	\label{th:B}
    	($\cite{DP}$, Theorem 1.8)
    	Let $n\in\mathbb{N}$ and 
    	$\Omega$ be a smooth bounded domain in $\mathbb{R}^{n}$. 
    	If $A$ is a smooth uniformly positive symmetric matrix-valued function on $\overline{\Omega}$, then
    	\begin{equation*}   \label{eq:B}
    		\int_{\Omega}\left|\mathrm{div}A\left( x\right)  \right| dx
    		+\int_{\partial\Omega}\left| A\left( \nu\right) \right|d\sigma\left( x\right) 
    		\geq n\left| B^{n}\right|^{\frac{1}{n}} 
    		\left( \int_{\Omega}\mathrm{det}\left( A\left( x\right) \right)^{\frac{1}{n-1}}dx\right)  ^{\frac{n-1}{n}},
    	\end{equation*}
    	where $\nu$ is the unit outer normal vector field on $\partial\Omega$. Moerover, 
    	the equality holds if and only if $A$ is divergence-free, there is a smooth convex function $u$ on $\Omega$
    	such that $\nabla u\left( \Omega\right)$ is a ball centered at the origin and 
    	$A=\left(\mathrm{cof} D^{2}u\right)$, 
    	the cofactor matrix of $D^{2}u$.
    \end{theorem}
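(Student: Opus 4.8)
\emph{Proof idea.} The plan is to run the Alexandrov--Bakelman--Pucci argument of \cite{Bre, Cab, Cabr}, replacing the Euclidean Laplacian by the divergence operator $u\mapsto\mathrm{div}(A\nabla u)$ and tuning the lower-order terms so that both $\int_\Omega|\mathrm{div}A|$ and $\int_{\partial\Omega}|A(\nu)|$ are absorbed with the sharp constant; a pleasant feature is that the non-convexity of $\Omega$ will cause no trouble, since the relevant boundary condition is a conormal one and, together with the positivity of $A$, it excludes boundary contact points regardless of the shape of $\partial\Omega$. Write $\mu=\int_\Omega\mathrm{det}(A(x))^{1/(n-1)}\,dx$ and $\beta=\int_\Omega|\mathrm{div}A(x)|\,dx+\int_{\partial\Omega}|A(\nu)|\,d\sigma(x)$; uniform positivity of $A$ forces $|A(\nu)|>0$ on $\partial\Omega$, so $\mu,\beta>0$. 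The first step is to solve the conormal problem
\[
\mathrm{div}(A\nabla u)=\frac{\beta}{\mu}\,\mathrm{det}(A)^{1/(n-1)}-|\mathrm{div}A|\ \ \text{in }\Omega,\qquad \langle A\nabla u,\nu\rangle=|A(\nu)|\ \ \text{on }\partial\Omega .
\]
The constant $\beta/\mu$ is exactly the one for which the compatibility identity $\int_\Omega\bigl(\tfrac{\beta}{\mu}\mathrm{det}(A)^{1/(n-1)}-|\mathrm{div}A|\bigr)\,dx=\int_{\partial\Omega}|A(\nu)|\,d\sigma$ holds. As $A$ is smooth and uniformly elliptic, $\partial\Omega$ and the datum $|A(\nu)|$ are smooth, and the source is Lipschitz, elliptic theory gives a solution $u\in C^{2,\gamma}(\overline\Omega)$.

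The second step is the contact-set/area-formula estimate. For $p$ in the open unit ball $B^{n}$, pick $x_{p}\in\overline\Omega$ attaining $\min_{\overline\Omega}\bigl(u(x)-\langle p,x\rangle\bigr)$. If $x_{p}\in\partial\Omega$, vanishing of the tangential gradient of $u-\langle p,\cdot\rangle$ forces $p=\nabla u(x_{p})+s\,\nu(x_{p})$ with $s\ge0$, and then the conormal condition and symmetry of $A$ give
\[
|A(\nu)|\le\langle A\nabla u(x_{p}),\nu\rangle+s\langle A\nu,\nu\rangle=\langle A(x_{p})p,\nu\rangle=\langle p,A(x_{p})\nu\rangle\le|p|\,|A(\nu)|<|A(\nu)|,
\]
a contradiction. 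Hence each such minimum is interior, so for $U:=\{x\in\Omega:\ D^{2}u(x)\ge0,\ |\nabla u(x)|<1\}$ one has $B^{n}\subseteq\nabla u(U)$, and the area formula yields $|B^{n}|\le|\nabla u(U)|\le\int_{U}\mathrm{det}(D^{2}u)\,dx$.

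The third step is algebraic. On $U$ the matrix $D^{2}u$ is positive semidefinite and $A$ positive definite, so the arithmetic--geometric mean inequality applied to the eigenvalues of $A^{1/2}D^{2}u\,A^{1/2}$ gives $\mathrm{det}(A)\,\mathrm{det}(D^{2}u)\le\bigl(\tfrac1n\mathrm{tr}(AD^{2}u)\bigr)^{n}$; moreover $\mathrm{tr}(AD^{2}u)=\mathrm{div}(A\nabla u)-\langle\mathrm{div}A,\nabla u\rangle\le\tfrac{\beta}{\mu}\mathrm{det}(A)^{1/(n-1)}$, because on $U$ one has $|\nabla u|<1$, whence $-|\mathrm{div}A|-\langle\mathrm{div}A,\nabla u\rangle\le|\mathrm{div}A|\bigl(|\nabla u|-1\bigr)\le0$. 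Since $\mathrm{tr}(AD^{2}u)\ge0$ on $U$, these combine to $\mathrm{det}(D^{2}u)\le(\beta/n\mu)^{n}\mathrm{det}(A)^{1/(n-1)}$ there, and therefore $|B^{n}|\le\int_{U}\mathrm{det}(D^{2}u)\,dx\le(\beta/n\mu)^{n}\int_{\Omega}\mathrm{det}(A)^{1/(n-1)}\,dx=(\beta/n\mu)^{n}\mu$, i.e.\ $\beta\ge n|B^{n}|^{1/n}\mu^{(n-1)/n}$, as claimed. For the equality case one runs these estimates backwards: equality forces $U=\Omega$ up to a null set, $\nabla u|_{U}$ injective a.e.\ (so $\nabla u(\Omega)$ is a ball centred at the origin), $\mathrm{div}A\equiv0$ (the only way the trace bound can be tight while $|\nabla u|<1$, after which $u\in C^{\infty}$), and $A^{1/2}D^{2}u\,A^{1/2}$ scalar, i.e.\ $D^{2}u$ proportional to $A^{-1}$, which after rescaling $u$ is exactly $A=\mathrm{cof}\,D^{2}u$; conversely such a configuration is readily checked to give equality, using the Piola identity $\mathrm{div}(\mathrm{cof}\,D^{2}u)=0$, the surface-measure transformation $\int_{\partial\Omega}|\mathrm{cof}(D^{2}u)\nu|\,d\sigma=|\partial(\nabla u(\Omega))|$, and the isoperimetric equality for the ball.

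I expect the main obstacle to be the correct design of the auxiliary problem, i.e.\ the very first step: the modification $-|\mathrm{div}A|$ of the source and the conormal datum $|A(\nu)|$ have to be chosen so that, simultaneously, the compatibility condition pins down the constant as $\beta/\mu$, the positivity of $A$ alone kills the boundary contact points, and the quantity $-|\mathrm{div}A|-\langle\mathrm{div}A,\nabla u\rangle$ is nonpositive on the contact set (which is where $|\nabla u|<1$ is used); once this single choice is seen to satisfy all three requirements — and to make the rigidity discussion close up — the remaining ingredients (elliptic solvability and regularity, the area formula, the arithmetic--geometric mean estimate) are standard.
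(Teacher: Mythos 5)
Your argument is correct and is essentially the same ABP scheme the paper relies on: note that the paper does not actually prove Theorem~1.2 but quotes it from \cite{DP}, and its own Sections~3--4 run exactly your construction --- the conormal auxiliary problem with the compatibility constant, exclusion of boundary contact points via positivity of $A$, the area formula over the contact set, and the matrix arithmetic--geometric mean inequality of Lemma~2.1 --- in the harder submanifold setting. The one place where your write-up is materially thinner than a complete proof is the equality case, which you sketch with the right ingredients but which requires the pointwise rigidity bookkeeping (upgrading a.e.\ identities on the contact set to everywhere identities by density and continuity, deducing $D^{2}u>0$ and then $u\in C^{\infty}$, and showing $\nabla u(\Omega)$ is exactly a ball) that the paper carries out at length in Section~4 for the codimension-two analogue.
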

    In this paper, we prove several Michael-Simon-Sobolev inequalities for smooth symmetric uniformly positive define $\left( 0,2\right)$-tensor fields on compact submanifolds in Euclidean space. Our main result is the following theorem.
	\begin{theorem}	\label{th:01}
		Let $\Sigma^{n}$ be a compact $n$-dimensional submanifold of $\mathbb{R}^{n+m}$ with smooth boundary $\partial\Sigma$ (possibly  $\partial\Sigma=\varnothing$), 
		where $m\geq2$. 
		If $A$ is a smooth symmetric uniformly positive define $\left( 0,2\right)-$tensor field on $\Sigma$,  then
		\begin{equation*}
			\int_{\Sigma}\sqrt{\left| \mathrm{div}_{\Sigma}A\right|^{2}+\left| \left\langle A,\Rmnum{2}\right\rangle \right|^{2}}
			+\int_{\partial\Sigma}\left| A\left( \nu\right) \right| \geq
			n\left[ \frac{\left( n+m\right)\left| B^{n+m}\right|}{m\left| B^{m}\right|}\right] ^{\frac{1}{n}}\left( \int_{\Sigma}\left( \mathrm{det}A\right)^{\frac{1}{n-1}}\right) ^{\frac{n-1}{n}},
		\end{equation*}
		where $\nu$ is the unit outer conormal vector field on $\partial\Sigma$, $\Rmnum{2}$ is the second fundamental form of $\Sigma$.
	\end{theorem}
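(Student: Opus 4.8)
The plan is to adapt Brendle's ABP approach (as in Theorem~\ref{th:A}/\ref{th:B} and \cite{Bre}) to the tensorial, submanifold setting. We want to bound from below the "mass" $\int_\Sigma\sqrt{|\mathrm{div}_\Sigma A|^2+|\langle A,\Rmnum{2}\rangle|^2}+\int_{\partial\Sigma}|A(\nu)|=:T$. Normalize so that this quantity equals $n\left(\int_\Sigma(\det A)^{1/(n-1)}\right)^{(n-1)/n}$ times the appropriate constant is exactly what we must prove; instead, following Brendle, we solve an elliptic PDE for a potential $u:\Sigma\to\mathbb{R}$ adapted to $A$. Concretely, I would seek $u$ solving a Neumann-type problem
\begin{equation*}
\mathrm{div}_\Sigma\!\left(A\,\nabla_\Sigma u\right)=f\ \text{ on }\Sigma,\qquad \langle A\nabla_\Sigma u,\nu\rangle=|A(\nu)|\ \text{ on }\partial\Sigma,
\end{equation*}
where $f$ is the constant (or suitably normalized density) making the compatibility condition hold; here $A\nabla_\Sigma u$ denotes the vector field metrically dual to the $1$-form $A(\nabla_\Sigma u,\cdot)$. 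Standard elliptic theory (since $A$ is uniformly positive definite and smooth) gives a $C^{2,\alpha}$ solution $u$.

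Next I define the "transport" map into $\mathbb{R}^{n+m}$ in the spirit of Brendle's proof: for $x\in\Sigma$ and $y$ in the normal space $N_x\Sigma$ with $|y|$ small, set $\Phi(x,y)=\nabla_\Sigma u(x)+y$ (viewed in $\mathbb{R}^{n+m}$ after the isometric identification), restricted to the "contact set" $U$ where $u$ has the appropriate lower-barrier/convexity property. The key differential-geometric computation is to evaluate the Jacobian of $\Phi$ on $U$: using that on the contact set the Hessian-type operator built from $D^2_\Sigma u$ and the second fundamental form is nonnegative, one shows
\begin{equation*}
|\mathrm{Jac}\,\Phi|\ \le\ \left(\frac{1}{n}\Big(\mathrm{tr}_A(\text{Hess})\Big)\right)^{n}\cdot(\text{normal factor})\ \le\ \left(\frac{1}{n}\,\frac{f}{\det A}\cdot(\cdots)\right)^{n},
\end{equation*}
the crucial point being that the tensorial trace $\langle A,\nabla^2_\Sigma u\rangle-\langle A,\Rmnum{2}\rangle\cdot y$ equals $f$ up to lower order, so the AM–GM inequality converts the trace into $(\det A)$-weighted products; the factor $\sqrt{|\mathrm{div}_\Sigma A|^2+|\langle A,\Rmnum{2}\rangle|^2}$ enters precisely because $\mathrm{div}_\Sigma(A\nabla_\Sigma u)=\langle A,\nabla^2_\Sigma u\rangle+\langle\mathrm{div}_\Sigma A,\nabla_\Sigma u\rangle$ and the mean-curvature term from moving in the $y$-directions contributes $\langle A,\Rmnum{2}\rangle$. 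I would then show that the image $\Phi(U)$ covers a Euclidean ball of radius $r$ (determined by the normalization of $f$), so that
\begin{equation*}
|B^{n+m}|\,r^{n+m}\ \le\ \int_U |\mathrm{Jac}\,\Phi|\ \le\ \frac{m|B^m|}{n+m}\int_\Sigma\left(\frac{1}{n}\Big(\sqrt{|\mathrm{div}_\Sigma A|^2+|\langle A,\Rmnum{2}\rangle|^2}\Big)\right)^{n}(\det A)^{-1}\cdot(\cdots),
\end{equation*}
where the $\frac{m|B^m|}{n+m}$ arises from integrating the radial normal variable $|y|\in[0,r]$ over the $m$-dimensional normal fibers. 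Rearranging and choosing $r$ so that $\int_\Sigma f = n|B^{n+m}|/(m|B^m|)\cdot r^{\,?}$ (the exact power dictated by scaling) yields the claimed inequality after an application of Hölder's inequality to pass from the $L^n$-type bound to the stated $(n-1)/n$ exponent on $\int_\Sigma(\det A)^{1/(n-1)}$.

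The main obstacle, and where the argument genuinely differs from the flat case of Theorem~\ref{th:B}, is the Jacobian computation on the contact set when the extra $m$ normal directions are present: one must verify that the full $(n+m)\times(n+m)$ Jacobian of $(x,y)\mapsto\nabla_\Sigma u(x)+y$ factors correctly as an $n$-dimensional tangential block (governed by $A^{-1}$-Hessian of $u$ twisted by $\Rmnum{2}\cdot y$) times an $m$-dimensional normal block, and that the contact-set condition forces the tangential block to be positive semidefinite with the right trace identity. Controlling the normal block requires the bound $|y|\le r$ together with a Weingarten estimate, and assembling everything into a clean AM–GM step is the technical heart. A secondary subtlety is the correct normalization constant: tracking the factor $\frac{(n+m)|B^{n+m}|}{m|B^m|}$ through the coarea-type integration over normal fibers must be done carefully to land on exactly the constant in the statement. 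I do not expect the equality-case analysis to be needed here since the theorem as stated claims only the inequality; if desired it would follow the rigidity discussion in Theorem~\ref{th:A}.
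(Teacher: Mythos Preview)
Your overall strategy coincides with the paper's---solve a Neumann problem for $u$ with respect to the operator $\mathrm{div}_\Sigma(A\nabla^\Sigma u)$, define $\Phi(x,y)=\nabla^\Sigma u(x)+y$ on the normal bundle, show $\Phi$ covers a ball, and bound the Jacobian via the matrix AM--GM inequality (Lemma~2.1). However, two concrete steps are missing or incorrect, and without them the argument does not close.

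\textbf{(1) The right-hand side of the PDE.} You leave $f$ as ``the constant (or suitably normalized density)''. In the paper the choice is specific and essential: after scaling so that the left-hand side of the inequality equals $n\int_\Sigma(\det A)^{1/(n-1)}$, one takes
\[
f(x)=n\bigl(\det A(x)\bigr)^{1/(n-1)}-\sqrt{|\mathrm{div}_\Sigma A|^2(x)+|\langle A,\Rmnum{2}\rangle|^2(x)}.
\]
This is exactly what makes the AM--GM step produce the clean pointwise bound $\det D\Phi(x,y)\le (\det A(x))^{1/(n-1)}$ on the contact set $V$: one computes
\[
\langle A, D^2_\Sigma u-\langle\Rmnum{2},y\rangle\rangle
= f - \bigl\langle \mathrm{div}_\Sigma A+\langle A,\Rmnum{2}\rangle,\ \nabla^\Sigma u+y\bigr\rangle
\le f+\sqrt{|\mathrm{div}_\Sigma A|^2+|\langle A,\Rmnum{2}\rangle|^2}
= n(\det A)^{1/(n-1)},
\]
and then Lemma~2.1 gives $\det(D^2_\Sigma u-\langle\Rmnum{2},y\rangle)\le (\det A)^{1/(n-1)}$. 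With a constant $f$ (or an unspecified one) you instead arrive at a bound of the form $(\det A)^{-1}\bigl(\tfrac{1}{n}(f+\sqrt{\cdots})\bigr)^n$, which does not integrate to the desired right-hand side; your proposed use of H\"older at the end does not repair this.

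\textbf{(2) The sharp constant via the annulus trick.} Integrating $|\det D\Phi|\le (\det A)^{1/(n-1)}$ over $V$ and using $\Phi(V)\supset B^{n+m}$ directly yields only $|B^{n+m}|\le |B^m|\int_\Sigma(\det A)^{1/(n-1)}$, which gives the weaker constant $|B^{n+m}|/|B^m|$. The stated constant $\tfrac{(n+m)|B^{n+m}|}{m|B^m|}$ comes from Brendle's slicing trick: restrict to the spherical shell $\{\sigma^2<|\xi|^2<1\}$, estimate the fiber integral $\int_{\{\sigma^2<|\nabla^\Sigma u|^2+|y|^2<1\}}dy$ by $\tfrac{m}{2}|B^m|(1-\sigma^2)$, divide by $1-\sigma$, and let $\sigma\to 1^-$. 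Your proposal does not mention this, and the factor $\tfrac{m|B^m|}{n+m}$ does \emph{not} arise simply from ``integrating the radial normal variable $|y|\in[0,r]$''.

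A smaller point: there is no separate ``normal block'' in $\mathrm{Jac}\,\Phi$ requiring a Weingarten estimate. The Jacobian is exactly the $n\times n$ determinant $\det\bigl(D^2_\Sigma u(x)-\langle\Rmnum{2}(x),y\rangle\bigr)$ (Lemma~3.2); the derivative of $\Phi$ in the normal directions is the identity. The second fundamental form enters only through the tangential block.
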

    When $m=2$,
    because of $\left( n+2\right)\left| B^{n+2}\right|=2\left| B^{2}\right| \left| B^{n}\right|$,
    we obtain a sharp Sobolev inequality for submanifolds of codimensional 2.  
 	\begin{theorem}
 	Let $\Sigma^{n}$ be a compact $n$-dimensional submanifold of $\mathbb{R}^{n+2}$ with smooth boundary $\partial\Sigma$ (possibly  $\partial\Sigma=\varnothing$),  
 	and let $A$ be a smooth symmetric uniformly positive define $\left( 0,2\right)$-tensor field on $\Sigma$, then
 	\begin{equation*}
 		\int_{\Sigma}\sqrt{\left| \mathrm{div}_{\Sigma}A\right|^{2}+\left| \left\langle A,\Rmnum{2}\right\rangle \right|^{2}}
 		+\int_{\partial\Sigma}\left| A\left( \nu\right) \right| \geq
 		n\left| B^{n}\right|^{\frac{1}{n}}\left( \int_{\Sigma}\left( \mathrm{det}A\right)^{\frac{1}{n-1}}\right) ^{\frac{n-1}{n}}.
 	\end{equation*}
 The equality holds if and only if  
  $\Sigma$ is a compact domain in $\mathbb{R}^{n}\subset \mathbb{R}^{n+2}$, $A$ is divergence-free, there is a smooth convex function $u$ on $\Sigma$
  such that $\nabla^{\Sigma} u\left( \Sigma\right)$ is a unit closed ball in $\mathbb{R}^{n}$ centered at the origin and 
  $A=\mathrm{cof} D^{2}_{\Sigma}u$, 
  the cofactor tensor of $D^{2}_{\Sigma}u$. 
 \end{theorem}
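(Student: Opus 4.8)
The plan is to follow the Alexandrov-Bakelman-Pucci scheme of Brendle, adapted to the tensorial setting exactly as in the proof of Theorem \ref{th:01} (the case $m=2$ being a direct specialization via the identity $(n+2)|B^{n+2}|=2|B^2||B^n|$, which turns the geometric constant into $n|B^n|^{1/n}$). Concretely, I would introduce the function $f$ solving the linear elliptic PDE $\operatorname{div}_\Sigma(A\,\nabla^\Sigma f)=\operatorname{div}_\Sigma A-\langle A,\mathrm{II}\rangle\cdot(\text{normal data})$ with Neumann-type boundary condition $A(\nabla^\Sigma f,\nu)=|A(\nu)|$ on $\partial\Sigma$, normalized so that the relevant integrals balance; then I would define the transport map $\Phi(x)=\nabla^\Sigma f(x)+(\text{ambient normal component})$ from the open "contact set" $U\subset\Sigma\times B^m$ into $\mathbb{R}^{n+m}$, show $\Phi(U)\supseteq B^{r}$ for the appropriate radius $r$ determined by the normalization, and compare volumes: $|B^{n+m}|\,r^{n+m}\le\int_U|\det D\Phi|$. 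Bounding $\det D\Phi$ by the AM-GM inequality against $(\det A)^{1/(n-1)}$ and integrating out the $B^m$ fiber (which contributes the factor $m|B^m|/(n+m)$) yields the stated inequality.

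For the equality case — which is the genuinely new content of Theorem 1.5 beyond Theorem \ref{th:01} — I would trace every inequality used and record when it is tight. Equality in the volume comparison forces $\Phi$ to be surjective onto the closed ball and essentially injective, equality in AM-GM forces the eigenvalues of the relevant Hessian-type matrix to coincide (so $D^2_\Sigma f$ is a multiple of the identity in the $A$-metric, giving $A=\mathrm{cof}\,D^2_\Sigma u$ with $u$ convex after renormalizing $f$), equality in the boundary term forces $A(\nu)$ to be parallel to $\nu$, and — crucially — equality in the Cauchy-Schwarz step $\sqrt{|\operatorname{div}_\Sigma A|^2+|\langle A,\mathrm{II}\rangle|^2}\ge$ (its projection) forces the normal contribution to vanish pointwise; since $A$ is uniformly positive definite and $\langle A,\mathrm{II}\rangle=0$ with $A$ nondegenerate forces $\mathrm{II}=0$, the submanifold $\Sigma$ must be totally geodesic, hence an open subset of an affine $n$-plane $\mathbb{R}^n\subset\mathbb{R}^{n+2}$, and then $\operatorname{div}_\Sigma A=0$. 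Finally $\nabla^\Sigma u(\Sigma)$ being exactly the closed unit ball follows from the radius normalization together with surjectivity of $\Phi$. I would also verify the converse: for a domain in $\mathbb{R}^n$ with $A=\mathrm{cof}\,D^2u$ and $\nabla u(\Sigma)=\overline{B^n}$, the left side reduces to $\int_\Sigma|\operatorname{div}A|+\int_{\partial\Sigma}|A(\nu)|$ and one recovers equality in Theorem \ref{th:B}, which is already known to be sharp precisely in this configuration.

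The main obstacle I anticipate is the equality analysis on the boundary and at the "free boundary" of the contact set: one must argue that the contact set $U$ has full measure (no loss in passing from $\Sigma$ to $U$), that the transport map is a diffeomorphism onto the open ball up to a null set, and that the pointwise vanishing of the normal defect propagates from the contact set to all of $\Sigma$ — this last step needs either a connectedness/unique-continuation argument for the elliptic PDE or a direct density argument using smoothness of $A$ and $\Sigma$. A secondary technical point is checking that the convex function $u$ obtained on the (a priori only open) contact set extends smoothly to all of $\overline{\Sigma}$ with $\nabla^\Sigma u$ mapping onto the full closed unit ball; here I would lean on the smoothness hypotheses and the regularity theory for the Neumann problem, as in \cite{DP}.
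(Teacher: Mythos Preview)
Your outline for the inequality itself is fine --- specializing Theorem~\ref{th:01} to $m=2$ and invoking $(n+2)|B^{n+2}|=2|B^2||B^n|$ is exactly what the paper does. The obstacles you anticipate (full measure of the contact set, propagation from the contact set to all of $\Sigma$) are real and are handled in the paper by separate lemmas; your instincts there are right.

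The genuine gap is in your equality analysis, specifically the step ``$\langle A,\Rmnum{2}\rangle=0$ with $A$ nondegenerate forces $\Rmnum{2}=0$''. This implication is false. The quantity $\langle A,\Rmnum{2}\rangle$ is a \emph{normal vector} obtained by tracing $A$ against $\Rmnum{2}$; when $A=g_\Sigma$ it is the mean curvature vector, and vanishing mean curvature certainly does not force $\Rmnum{2}=0$. Positive-definiteness of $A$ does not help: a single trace constraint cannot kill a full symmetric $2$-tensor with values in the normal bundle. Consequently your chain ``Cauchy--Schwarz equality $\Rightarrow \langle A,\Rmnum{2}\rangle=0 \Rightarrow \Rmnum{2}=0$'' breaks, and with it the conclusion that $\Sigma$ lies in an affine $n$-plane.

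The paper extracts $\Rmnum{2}\equiv 0$ by a different mechanism that you have not identified. Equality in the AM--GM step (Lemma~2.1) yields, for \emph{each} unit normal direction $\bar y$, the tensor identity
\[
D_\Sigma^2 u(\bar x)-\langle \Rmnum{2}(\bar x),\bar y\rangle=(\det A(\bar x))^{\frac{1}{n-1}}A^{-1}(\bar x),
\]
whose right-hand side is independent of $\bar y$. Replacing $\bar y$ by $-\bar y$ and subtracting gives $\langle \Rmnum{2}(\bar x),\bar y\rangle=0$ as a full $(0,2)$-tensor, not merely after tracing against $A$. Since $\bar y$ ranges over a sphere spanning the two-dimensional normal space, this forces $\Rmnum{2}(\bar x)=0$. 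Only \emph{after} $\Rmnum{2}=0$ is established does one return to the Cauchy--Schwarz equality to deduce $\mathrm{div}_\Sigma A=0$; your ordering of the deductions is reversed and the missing $\pm\bar y$ symmetry trick is the essential idea you need to add.
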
   
    Since $\mathbb{R}^{n+1}$ is a totally geodesic submanifold in $\mathbb{R}^{n+2}$, Theorem 1.3 and Theorem 1.4 imply a sharp Sobolev inequality for submanifolds of codimension 1:

	\begin{corollary}	\label{co:02}
	Let $\Sigma^{n}$ be a compact $n$-dimensional submanifold of $\mathbb{R}^{n+1}$ with smooth boundary $\partial\Sigma$ (possibly  $\partial\Sigma=\varnothing$), 
	if $A$ is a smooth symmetric uniformly positive define $\left( 0,2\right)$-tensor field on $\Sigma$,  then
	\begin{equation*}
		\int_{\Sigma}\sqrt{\left| \mathrm{div}_{\Sigma}A\right|^{2}+\left| \left\langle A,\Rmnum{2}\right\rangle \right|^{2}}
		+\int_{\partial\Sigma}\left| A\left( \nu\right) \right| \geq
		n\left| B^{n}\right|^{\frac{1}{n}}
		\left( \int_{\Sigma}\left( \mathrm{det}A\right)^{\frac{1}{n-1}}\right) ^{\frac{n-1}{n}},
	\end{equation*}
	where $\nu$ is the unit outer conormal vector field on $\partial\Sigma$, $\Rmnum{2}$ is the second fundamental form of $\Sigma$.
    \end{corollary}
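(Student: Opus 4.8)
\emph{Plan of proof.} The strategy is to obtain Corollary~\ref{co:02} as an immediate consequence of the codimension-two estimate, namely Theorem~1.4 (equivalently, Theorem~\ref{th:01} with $m=2$), by viewing $\mathbb{R}^{n+1}$ as the totally geodesic hyperplane $\mathbb{R}^{n+1}\times\{0\}$ inside $\mathbb{R}^{n+2}=\mathbb{R}^{n+1}\times\mathbb{R}$. Under this inclusion a compact $n$-dimensional submanifold $\Sigma\subset\mathbb{R}^{n+1}$ with smooth boundary becomes a compact $n$-dimensional submanifold of $\mathbb{R}^{n+2}$ of codimension $m=2$, to which Theorem~1.4 applies and delivers the inequality with constant $n\lvert B^{n}\rvert^{1/n}$. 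It then remains only to verify that the quantities entering that inequality --- the interior integrand, the boundary integrand $\lvert A(\nu)\rvert$, the Riemannian volume, and the factor $\lvert B^{n}\rvert^{1/n}$ together with the $\det A$ integral --- are the same whether $\Sigma$ is regarded inside $\mathbb{R}^{n+1}$ or inside $\mathbb{R}^{n+2}$.

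The induced metric on $\Sigma$ does not depend on which of the two Euclidean spaces is taken as the ambient one, so the Levi-Civita connection $\nabla^{\Sigma}$, the tangential divergence $\mathrm{div}_{\Sigma}$, the pointwise determinant $\det A$, the Riemannian measure on $\Sigma$, and the arc-length measure and outward conormal $\nu$ on $\partial\Sigma$ are all unchanged; hence so are $\int_{\partial\Sigma}\lvert A(\nu)\rvert$ and $\bigl(\int_{\Sigma}(\det A)^{1/(n-1)}\bigr)^{(n-1)/n}$. The only term requiring an argument is $\langle A,\Rmnum{2}\rangle$. Writing $\overline{\nabla}$ for the flat connection of $\mathbb{R}^{n+2}$ and $e=(0,\dots,0,1)$ for the unit normal to the hyperplane, for any tangent vector fields $X,Y$ on $\Sigma$ one has $\langle\overline{\nabla}_{X}Y,e\rangle=0$ because $\mathbb{R}^{n+1}\times\{0\}$ is totally geodesic in $\mathbb{R}^{n+2}$; therefore the second fundamental form of $\Sigma\subset\mathbb{R}^{n+2}$ has no component along $e$ and is identified with the second fundamental form $\Rmnum{2}$ of $\Sigma\subset\mathbb{R}^{n+1}$. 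Consequently $\langle A,\Rmnum{2}\rangle$, and in particular $\lvert\langle A,\Rmnum{2}\rangle\rvert$, is the same in both ambient spaces, so the interior integrand $\sqrt{\lvert\mathrm{div}_{\Sigma}A\rvert^{2}+\lvert\langle A,\Rmnum{2}\rangle\rvert^{2}}$ is unchanged as well.

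Combining these identifications, the inequality furnished by Theorem~1.4 for $\Sigma\subset\mathbb{R}^{n+2}$ reads verbatim as the assertion of Corollary~\ref{co:02}; when $\partial\Sigma=\varnothing$ the boundary integral simply drops out on both sides. I do not anticipate a genuine difficulty here: the only point to watch is the bookkeeping just described, i.e.\ that increasing the codimension cannot manufacture an extra contribution to $\langle A,\Rmnum{2}\rangle$, which is precisely what the totally geodesic property of the hyperplane rules out.
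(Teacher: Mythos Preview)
Your proposal is correct and follows exactly the paper's approach: the paper simply observes that ``$\mathbb{R}^{n+1}$ is a totally geodesic submanifold in $\mathbb{R}^{n+2}$'' and concludes that Theorem~1.3 and Theorem~1.4 imply Corollary~\ref{co:02}. Your write-up just spells out the bookkeeping (intrinsic quantities unchanged, no new normal component of $\Rmnum{2}$) that the paper leaves implicit.
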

    \begin{remark}	\label{Re:02}
    In the special case of when $A$ is a conformal metric on $\Sigma$, i. e. $A=fg_{\Sigma}$ for a positive function $f$ on $\Sigma$, we have $|\mathrm{div}_{\Sigma}A|^{2}=|\nabla^{\Sigma}f|^{2}$,  $\left\langle A,\Rmnum{2}\right\rangle=fH$ and $\left| A\left( \nu\right) \right| =f$, where $H$ denotes the mean curvature vector of $\Sigma$. Therefore, the above inequalities now are exactly the results in \cite{Bre}.
    \end{remark}
    This paper is organized as follows. In Section 2, we will give basic notions and a generalized trace inequality for the product of two square matrices. In Section 3 and 4, we will give the proofs of Theorem 1.3 and Theorem 1.4, respectively.

    {\bf Acknowledgement.}  This work was partially supported by Science and Technology Commission of Shanghai Municipality (No. 22DZ2229014) and the National Natural Science Foundation of China (Grant No. 12271163). The research is supported by Shanghai Key Laboratory of PMMP.

	\section{Preliminaries}
    
    Let $\Sigma^{n}$ be a compact $n$-dimensional submanifold	with smooth boundary $\partial\Sigma$ (possibly  $\partial\Sigma=\varnothing$) in Eucldiean space.
    In addition, $g_{\Sigma}$ denotes the induced Riemannian metric on $\Sigma$,
    $D_{\Sigma}$ denotes the Levi-Civita connection on $\Sigma$, 
    and $\nabla^{\Sigma}$ denotes the gradient of $\Sigma$.
    Let $A$ is a smooth symmetric uniformly positive defined $\left( 0,2\right)-$tensor field on $\Sigma$.
    For each point $x\in\Sigma$, we denote by $T_{x}\Sigma$ and $T_{x}^{\bot} \Sigma$ the tangent and normal space to  $\Sigma$ at $x$, respectively.
    Let $\left( x^{1}, \dots, x^{n}\right) $ be a local coordinate system on $\Sigma$,
    the divergence of A on $\Sigma$ is defined by
    \begin{equation*}
    	\mathrm{div}_{\Sigma}A:=
    	g_{\Sigma}^{ki}D^{\Sigma}_{k}A_{ij}dx^{j}.
    \end{equation*}
    Let $T$ and $S$ are two $\left(0,2\right)$-tensor fields on $\Sigma$. In general, the inner product of $T$ and $S$ can be written as
    \begin{equation*}
    	\left\langle T, S\right\rangle 
    	=g_{\Sigma}^{ik}g_{\Sigma}^{jl}T_{ij}S_{kl}
    	=T_{ij}S^{ij}.
    \end{equation*}
The composition of $T$ and $S$ is the $\left(0,2\right)$-tensor $T\circ S$ defined by 
    \begin{equation*}
(T\circ S)_{ij}=g^{kl}_{\Sigma}T_{ik}S_{lj}.
\end{equation*}
The determinant of $\mathrm{det}T$ is defined by the determinant of $\left(1,1\right)$-tensor $g_{\Sigma}^{ik}T_{jk}\frac{\partial}{\partial x^{i}}\otimes dx^{j}$. When $S>0$ and $T\circ S=\mathrm{det}S\cdot g_{\Sigma}$, we call $T$ the cofactor tensor of $S$. When $T\circ S=g_{\Sigma}$, we call $T$ the inverse tensor of $S$ denoted by $T^{-1}$. Meanwhile, $\Rmnum{2}$ denotes the second fundamental form of $\Sigma$ as defined by
     \begin{equation*}
     	\left\langle \Rmnum{2}\left( X,Y\right), Z\right\rangle :=
     	\left\langle \bar{D}_{X}Y, Z\right\rangle =
     	-\left\langle \bar{D}_{X}Z, Y\right\rangle,
     \end{equation*}
    where $X$ and $Y$ are tangent vector fields on $\Sigma$,
    $Z$ is a normal vector field to $\Sigma$, $\bar{D}$ denotes the standard connection on $\mathbb{R}^{n+m}$.
   Further, $\left\langle A, \Rmnum{2}\right\rangle\left( x\right) $ is the normal vector at $x\in\Sigma$ defined by
    \begin{equation*}
    	\left\langle A, \Rmnum{2}\right\rangle \left( x\right) =
         g_{\Sigma}^{ik}g_{\Sigma}^{jl}A_{ij}\Rmnum{2}(\frac{\partial}{\partial x ^{k}},\frac{\partial}{\partial x ^{l}}).
    \end{equation*}
At last, we list the following lemma which extends the arithmetic-geometric mean inequality to the product of two square matrices version:
    \begin{lemma}(Lemma A.1 in \cite{DP})	
   	For $n\in\mathbb{N}$, let $A$ and $B$ be square symmetric matrices of size $n$.
   	Assume that $A$ is positive definite and $B$ is non-negative definite. Then
    	\begin{equation*}
    		\mathrm{det}AB
    		\leq\left(\frac{\mathrm{tr}\left( AB\right)}{n} \right)^{n}.
    	\end{equation*}
    	The equality holds if and only if $AB=\lambda I_{n}$ for some $\lambda\geq0$, 
    	where $I_{n}$ is the identity matrix.
    \end{lemma}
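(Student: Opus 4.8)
The plan is to reduce the inequality to the classical arithmetic–geometric mean inequality applied to the eigenvalues of a single non-negative definite symmetric matrix. Since $A$ is symmetric and positive definite, it admits a unique symmetric positive definite square root $A^{1/2}$. The key observation is that $AB$ is similar to $C:=A^{1/2}BA^{1/2}$, because $AB=A^{1/2}\bigl(A^{1/2}BA^{1/2}\bigr)A^{-1/2}$; consequently $AB$ and $C$ have the same characteristic polynomial, and in particular $\det(AB)=\det(C)$ and $\mathrm{tr}(AB)=\mathrm{tr}(C)$.

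Next I would record that $C=A^{1/2}BA^{1/2}$ is symmetric (as a product of the symmetric matrix $B$ conjugated by the symmetric matrix $A^{1/2}$) and non-negative definite: for every vector $v$ one has $v^{\top}Cv=(A^{1/2}v)^{\top}B(A^{1/2}v)\geq 0$ since $B\geq 0$. Hence, by the spectral theorem, $C$ is orthogonally diagonalizable with real eigenvalues $\lambda_1,\dots,\lambda_n\geq 0$. Then $\det(AB)=\prod_{i=1}^{n}\lambda_i$ and $\mathrm{tr}(AB)=\sum_{i=1}^{n}\lambda_i$, so the asserted inequality is precisely $\prod_{i=1}^{n}\lambda_i\leq\left(\tfrac1n\sum_{i=1}^{n}\lambda_i\right)^{n}$, the scalar AM–GM inequality for the non-negative reals $\lambda_i$.

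For the equality discussion, AM–GM holds with equality exactly when $\lambda_1=\cdots=\lambda_n=:\lambda\geq 0$. Since $C$ is symmetric with all eigenvalues equal to $\lambda$, the spectral theorem forces $C=\lambda I_n$, i.e. $A^{1/2}BA^{1/2}=\lambda I_n$; multiplying on both sides by $A^{-1/2}$ yields $B=\lambda A^{-1}$ and hence $AB=\lambda I_n$. Conversely, if $AB=\lambda I_n$ for some $\lambda\geq 0$, then $\det(AB)=\lambda^{n}=\left(\tfrac{\mathrm{tr}(AB)}{n}\right)^{n}$, so equality holds. I do not anticipate a serious obstacle here: the only point requiring care is the reduction from $AB$, which need not itself be symmetric, to the symmetric matrix $C=A^{1/2}BA^{1/2}$ carrying the same spectrum, so that one is legitimately dealing with real non-negative eigenvalues and may invoke the scalar AM–GM inequality and the spectral characterization of scalar matrices.
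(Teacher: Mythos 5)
Your argument is correct and complete: the conjugation $AB=A^{1/2}\bigl(A^{1/2}BA^{1/2}\bigr)A^{-1/2}$ legitimately transfers the problem to the symmetric non-negative definite matrix $C=A^{1/2}BA^{1/2}$, after which the inequality and its equality case are exactly the scalar AM--GM for the eigenvalues of $C$. The paper itself quotes this lemma from \cite{DP} (Lemma A.1) without reproducing a proof, and your symmetrization argument is the standard one used there, so there is nothing further to reconcile.
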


	\section{Proof of Theorem 1.3}

    First, we prove Theorem 1.3 in the special case that $\Sigma$ is connected. By scaling, we assume that 
    \begin{equation} \label{eq:2.1}
    	\int_{\Sigma}\sqrt{\left| \mathrm{div}_{\Sigma}A\right|^{2}+\left| \left\langle A,\Rmnum{2}\right\rangle \right|^{2}}
    	+\int_{\partial\Sigma}\left| A\left( \nu\right) \right| =
    	n \int_{\Sigma}\left( \mathrm{det}A\right) ^{\frac{1}{n-1}}.
    \end{equation}
    Due to that $\Sigma$ is connected and $\eqref{eq:2.1}$, there exists a solution $u:\Sigma\rightarrow\mathbb{R}$ to the following equation 
   	\begin{equation}  
    	\left\{\begin{aligned}
    		&\mathrm{div}_{\Sigma}\left( A\left( \nabla^{\Sigma}u\right) \right) \left( x\right) 
    		=n\left( \mathrm{det}A\left( x\right) \right) ^{\frac{1}{n-1}}
    		-\sqrt{\left| \mathrm{div}_{\Sigma}A\right|^{2}\left( x\right) + \left| \left\langle A,\Rmnum{2}\right\rangle \right|^{2}\left( x\right) },\ \mathrm{in}\  \Sigma\backslash\partial\Sigma,  \\
    		&\left\langle A\left( \nabla^{\Sigma}u\right) \left( x\right),\nu\left( x\right) \right\rangle =\left| A\left( \nu\left( x\right)\right) \right|,\ \mathrm{on}\  \partial\Sigma.
    	\end{aligned} \right.
     \end{equation}
     We define
    \begin{equation*}
    	\begin{split}
        &\Omega:=\left\lbrace x\in\Sigma\backslash\partial\Sigma:
        \left| \nabla^{\Sigma}u\left( x\right) \right|<1\right\rbrace,\\  
    	&U:=\{ \left( x,y\right): x\in\Sigma\backslash\partial\Sigma,
    	y\in T_{x}^{\bot}\Sigma,
    	\left| \nabla^{\Sigma}u\left( x\right) \right|^{2}+\left| y\right| ^{2}<1\},\\  
    	&V:=\left\lbrace \left( x,y\right)\in U: 
        D_{\Sigma}^{2}u\left( x\right)
        -\left\langle \Rmnum{2}\left( x\right), y\right\rangle
        \geq0\right\rbrace,  \\
        \end{split}
    \end{equation*} 
    and a map $\Phi:T^{\perp}(\Sigma\backslash\partial\Sigma)\rightarrow\mathbb{R}^{n+m}$ by
    \begin{equation*}
    	\Phi\left( x, y\right) =\nabla^{\Sigma}u\left( x\right)+y
    \end{equation*} 
    for all $x\in\Sigma\backslash\partial\Sigma$ and 
    $y\in T_{x}^{\bot}\Sigma$.
    Standard elliptic regularity theory implies that the function $u\in C^{2, \gamma}(\Sigma)$ and $\Phi\in C^{1, \gamma}(T^{\perp}(\Sigma\backslash\partial\Sigma))$
    for each $0<\gamma<1 \left( \mathrm{see} \cite{DG}\right)$.
    
 	\begin{lemma}	\label{Le:2.1}
        $\Phi\left( V\right) =B^{n+m}$.
    \end{lemma}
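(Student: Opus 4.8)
The plan is to prove the two inclusions $\Phi(V)\subseteq B^{n+m}$ and $B^{n+m}\subseteq\Phi(V)$ separately, following the ABP scheme of Brendle. The first inclusion is immediate: if $(x,y)\in V\subseteq U$ then $|\Phi(x,y)|^2=|\nabla^{\Sigma}u(x)|^2+|y|^2<1$ since $y\perp T_x\Sigma$ while $\nabla^{\Sigma}u(x)\in T_x\Sigma$, so $\Phi(x,y)\in B^{n+m}$. The substance is the reverse inclusion.

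For $B^{n+m}\subseteq\Phi(V)$, I would fix an arbitrary $\xi\in\mathbb{R}^{n+m}$ with $|\xi|<1$ and seek $(x,y)\in V$ with $\nabla^{\Sigma}u(x)+y=\xi$. Consider the function $w:\overline{\Sigma}\to\mathbb{R}$, $w(x)=u(x)-\langle x,\xi\rangle$ (using the position vector of $\Sigma\subseteq\mathbb{R}^{n+m}$). Since $\overline{\Sigma}$ is compact, $w$ attains its minimum at some point $\bar x\in\overline{\Sigma}$. The first step is to rule out $\bar x\in\partial\Sigma$: at a boundary minimum the outward conormal derivative satisfies $\langle\nabla^{\Sigma}w(\bar x),\nu(\bar x)\rangle\le 0$, i.e. $\langle\nabla^{\Sigma}u(\bar x),\nu\rangle\le\langle\xi,\nu\rangle$; but the Neumann condition gives $\langle A(\nabla^{\Sigma}u(\bar x)),\nu\rangle=|A(\nu)|$, and one shows this is incompatible with $|\xi|<1$ — here I expect to use the positivity of $A$ together with Cauchy–Schwarz to compare $\langle\nabla^\Sigma u,\nu\rangle$ against $1$, forcing a contradiction. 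So $\bar x\in\Sigma\setminus\partial\Sigma$ is interior.

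At the interior minimum $\bar x$, first-order conditions give $\nabla^{\Sigma}w(\bar x)=0$, i.e. $\nabla^{\Sigma}u(\bar x)=\xi^{\top}$, the tangential part of $\xi$; second-order conditions give $D^2_\Sigma w(\bar x)\ge 0$. Now set $y:=\xi^{\perp}\in T_{\bar x}^{\perp}\Sigma$, so that $\Phi(\bar x,y)=\nabla^\Sigma u(\bar x)+y=\xi^\top+\xi^\perp=\xi$. It remains to check $(\bar x,y)\in V$. Membership in $U$ follows since $|\nabla^\Sigma u(\bar x)|^2+|y|^2=|\xi^\top|^2+|\xi^\perp|^2=|\xi|^2<1$. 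For the defining inequality of $V$, I must show $D^2_\Sigma u(\bar x)-\langle\Rmnum{2}(\bar x),y\rangle\ge 0$. The key computation is to express $D^2_\Sigma w$ at $\bar x$ in terms of $D^2_\Sigma u$ and the Hessian of $x\mapsto\langle x,\xi\rangle$ restricted to $\Sigma$; the latter Hessian is exactly $-\langle\Rmnum{2},\xi^\perp\rangle=-\langle\Rmnum{2},y\rangle$ (the tangential Hessian of a linear function of the ambient position vector is governed by the second fundamental form paired against the normal component of the linear form). Hence $0\le D^2_\Sigma w(\bar x)=D^2_\Sigma u(\bar x)-\langle\Rmnum{2}(\bar x),y\rangle$, which is precisely the condition for $(\bar x,y)\in V$. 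Therefore $\xi=\Phi(\bar x,y)\in\Phi(V)$.

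The main obstacle I anticipate is the boundary-exclusion step: one needs the Neumann boundary condition to genuinely prevent the minimum of $w$ from sitting on $\partial\Sigma$, and making this rigorous requires carefully combining the hypothesis $|\xi|<1$ with the positive-definiteness of $A$ to derive the contradiction with $\langle A(\nabla^\Sigma u),\nu\rangle=|A(\nu)|$ — this is the one place where the precise form of the boundary condition in the PDE system is used, and where the strict inequality $|\xi|<1$ (rather than $\le 1$) is essential. The second-order identity relating $D^2_\Sigma w$ to $D^2_\Sigma u$ and $\langle\Rmnum{2},y\rangle$ is routine differential geometry (differentiating $\langle x,\xi\rangle$ twice along $\Sigma$ and using the Gauss formula), so I would only sketch it.
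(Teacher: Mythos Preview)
Your proposal is correct and follows exactly the paper's proof: define $w(x)=u(x)-\langle x,\xi\rangle$, exclude boundary minima via the Neumann condition, and read off $(\bar x,y_0)\in V$ from the first- and second-order conditions at the interior minimum.

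Regarding the boundary step you flagged as uncertain: your sketch aims to ``compare $\langle\nabla^\Sigma u,\nu\rangle$ against $1$'', but that is not the right comparison and would not close by itself. The missing observation (which the paper uses) is that at a boundary minimum the derivative of $w$ along $\partial\Sigma$ vanishes, so $\nabla^\Sigma w(\bar x)$ is a nonpositive multiple of $\nu$. Multiplying $\langle\nabla^\Sigma w,\nu\rangle\le 0$ by the positive scalar $\langle A(\nu),\nu\rangle$ then gives $\langle\nabla^\Sigma w,A(\nu)\rangle\le 0$, and since $A$ is symmetric the Neumann condition reads $\langle\nabla^\Sigma u,A(\nu)\rangle=|A(\nu)|$; hence
\[
0\ \ge\ \langle\nabla^\Sigma w,A(\nu)\rangle
=\langle\nabla^\Sigma u,A(\nu)\rangle-\langle\xi^{\top},A(\nu)\rangle
=|A(\nu)|-\langle\xi,A(\nu)\rangle>0,
\]
the last strict inequality by Cauchy--Schwarz and $|\xi|<1$. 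So the contradiction is obtained by pairing with $A(\nu)$, not with $\nu$.
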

    \begin{proof}
    By the definitions of $U$ and $\Phi$, clearly, $\Phi\left( V\right) \subset B^{n+m}$. It is remained to prove $B^{n+m}\subset\Phi\left( V\right)$.
    Given an arbitrary vector $\xi\in\mathbb{R}^{n+m}$ such that $\left| \xi\right| <1$, we define a function
    $\omega:\Sigma\rightarrow\mathbb{R}$ by 
    $\omega\left( x\right) :=u\left( x\right) -\left\langle x, \xi\right\rangle $.
    Then
    \begin{equation*}
        \nabla^{\Sigma}\omega\left( x\right) =\nabla^{\Sigma}u\left( x\right) - \xi^{\top}
         \mathrm{for}\   x\in\Sigma,
    \end{equation*} 
    where $\xi^{\top}$  is the tangent part of $\xi$ to $\Sigma$. 
    Let $x_{0}$ be a minimum point of $\omega$,
    if $x_{0}\in\partial\Sigma$, then 
    \begin{equation*}
    	\nabla^{\Sigma}\omega\left( x_{0}\right) =\left\langle \nabla^{\Sigma}\omega\left( x_{0}\right), \nu\left( x_{0}\right) \right\rangle\nu\left( x_{0}\right),
    \end{equation*} 
    and
    \begin{equation*}
    	\left\langle \nabla^{\Sigma}\omega\left( x_{0}\right), \nu\left( x_{0}\right) \right\rangle \leq0.
    \end{equation*} 
    Since $A$ is symmetric positive defined and $\left| \xi\right| <1$, according to Cauchy-Schwarz inequality, 
    we have
    \begin{equation*}
       \begin{split}
        0&\geq \left\langle \nabla^{\Sigma}\omega\left(     x_{0}\right), \nu\left( x_{0}\right) \right\rangle
    	\left\langle \nu\left( x_{0}\right), A\left( \nu\left( x_{0}\right) \right) \right\rangle\\
    	&=\left\langle \nabla^{\Sigma}\omega\left( x_{0}\right), A\left( \nu\left( x_{0}\right) \right) \right\rangle\\
    	&=\left\langle \nabla^{\Sigma}u\left( x_{0}\right), A\left( \nu\left( x_{0}\right) \right) \right\rangle
    	-\left\langle\xi^{\top}, A\left( \nu\left( x_{0}\right) \right) \right\rangle\\
    	&=\left| A\left( \nu\left( x_{0}\right) \right)\right|
    	-\left\langle\xi, A\left( \nu\left( x_{0}\right) \right) \right\rangle\\
    	&>0.
      \end{split}
    \end{equation*}
    Thus, $\omega$ must attain its minimum in $\Sigma\backslash\partial\Sigma$, i.e. $x_{0}\in\Sigma\backslash\partial\Sigma$.
    Consequently, we have
    \begin{equation*}
    	\nabla^{\Sigma}\omega\left( x\right) =\nabla^{\Sigma}u\left( x\right) - \xi^{\top}
    	=0.
    \end{equation*} 
     Denote $\xi-\xi^{\top}$ by $y_{0}$, then
     \begin{equation*}
     	\nabla^{\Sigma}u\left( x_{0}\right)+y_{0}=\xi\in B^{n+m}. 
     \end{equation*} 
    Moreover, we obtain
    \begin{equation*}
        D_{\Sigma}^{2} \omega\left( x_{0}\right)
        =D_{\Sigma}^{2}u\left( x_{0}\right)
        -\left\langle \Rmnum{2}\left( x_{0}\right), y_{0}\right\rangle
        \geq0,  
    \end{equation*} 
    Therefore $\left( x_{0}, y_{0}\right)\in V $ and $\Phi\left( x_{0}, y_{0}\right)=\xi$. 
    Thus $B^{n+m}\subset\Phi\left( V\right)$.
    The lemma follows.
    \end{proof} 	
 
    \begin{lemma}	\label{Le:2.2}
    (\cite{Bre}, Lemma 5)
     The Jacobian determinant of $\Phi$ is given by
      \begin{equation*}
      	\mathrm{det}D\Phi\left( x, y\right) 
      	=\mathrm{det}\left( D_{\Sigma}^{2}u\left( x\right)
      	-\left\langle \Rmnum{2}\left( x\right), y\right\rangle \right) 
      \end{equation*} 
    for all $\left( x,y\right) \in T^{\perp}(\Sigma\backslash\partial\Sigma)$.
    \end{lemma}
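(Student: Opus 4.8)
The plan is to evaluate $D\Phi$ at a single, arbitrary point $(\bar x,\bar y)$ of the normal bundle in frames chosen to kill all connection terms, and then to recognise a block lower triangular matrix. First I would fix $\bar x\in\Sigma\setminus\partial\Sigma$ together with a local orthonormal tangent frame $e_1,\dots,e_n$ coming from Riemannian normal coordinates at $\bar x$ (so that $D^{\Sigma}_{e_i}e_j=0$ at $\bar x$) and a local orthonormal normal frame $\nu_1,\dots,\nu_m$ that is parallel for the normal connection at $\bar x$ (so that $(\bar D_{e_i}\nu_\alpha)^{\perp}=0$ at $\bar x$). Writing a nearby point of $T^{\perp}(\Sigma\setminus\partial\Sigma)$ as $\bigl(x,\sum_\alpha y^\alpha\nu_\alpha(x)\bigr)$ furnishes a chart with coordinates $(x^{1},\dots,x^{n},y^{1},\dots,y^{m})$, in which $\Phi(x,y)=\nabla^{\Sigma}u(x)+\sum_\alpha y^\alpha\nu_\alpha(x)$; in particular $\Phi$ is affine along each fibre.

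Next I would differentiate at $(\bar x,\bar y)$. Along the fibre, $\partial_{y^\alpha}\Phi=\nu_\alpha$, and at $\bar x$ these are the orthonormal normal vectors themselves, so their tangential components vanish. Along a base direction, the Gauss formula and the relation $\langle\Rmnum{2}(X,Y),Z\rangle=\langle\bar D_X Y,Z\rangle=-\langle\bar D_X Z,Y\rangle$ of Section~2 give
\[
\bar D_{e_i}\Phi=\bar D_{e_i}\nabla^{\Sigma}u+\sum_\alpha\bar y^\alpha\,\bar D_{e_i}\nu_\alpha ,
\]
whose tangential part has $e_j$-component $\langle\bar D_{e_i}\nabla^{\Sigma}u,e_j\rangle+\sum_\alpha\bar y^\alpha\langle\bar D_{e_i}\nu_\alpha,e_j\rangle=(D^{2}_{\Sigma}u)_{ij}-\sum_\alpha\bar y^\alpha\langle\Rmnum{2}(e_i,e_j),\nu_\alpha\rangle=\bigl(D^{2}_{\Sigma}u-\langle\Rmnum{2},\bar y\rangle\bigr)_{ij}$, and whose normal part equals $\Rmnum{2}\bigl(e_i,\nabla^{\Sigma}u(\bar x)\bigr)$, the normal-connection contribution having dropped by the choice of $\nu_\alpha$.

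Finally I would record $D\Phi(\bar x,\bar y)$ as a matrix relative to the orthonormal basis $(e_1,\dots,e_n,\nu_1,\dots,\nu_m)$ of $\mathbb{R}^{n+m}$ on the target and the corresponding coordinate frame on the domain: by the two computations above it is block lower triangular, with top-left $n\times n$ block $D^{2}_{\Sigma}u(\bar x)-\langle\Rmnum{2}(\bar x),\bar y\rangle$, top-right $n\times m$ block $0$, bottom-right $m\times m$ block $I_m$, and bottom-left block (whose $i$th column is $\Rmnum{2}(e_i,\nabla^{\Sigma}u(\bar x))$) that does not affect the determinant. Hence $\det D\Phi(\bar x,\bar y)=\det\bigl(D^{2}_{\Sigma}u(\bar x)-\langle\Rmnum{2}(\bar x),\bar y\rangle\bigr)\cdot\det I_m$, which is the asserted formula. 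I expect the only delicate point to be the bookkeeping of connection terms: the adapted frames are precisely what forces neither Christoffel symbols of $\Sigma$ nor normal-connection coefficients to survive at $\bar x$, so that the curvature term $\Rmnum{2}(e_i,\nabla^{\Sigma}u)$ stays confined to the off-diagonal block; one should also remember that $\det D\Phi$ is meant with respect to the natural volume forms on domain and target, hence is defined only up to sign, which is immaterial since only $|\det D\Phi|$ enters the subsequent area computation.
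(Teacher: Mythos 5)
Your computation is correct and is essentially the argument of the cited reference (Lemma 5 of Brendle's paper), which this paper invokes without reproducing a proof: adapted tangent and normal frames at the base point, the block lower triangular structure of $D\Phi$ with top-left block $D^{2}_{\Sigma}u-\langle \mathrm{II},y\rangle$ and bottom-right block $I_m$, and the resulting determinant identity. Nothing is missing.
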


     \begin{lemma}	\label{Le:2.3}
    	
    	The Jacobian determinant of $\Phi$ satisfies
    	\begin{equation*}
    	   0\leq\mathrm{det}D\Phi\left( x, y\right) 
    		\leq\left( \mathrm{det}A\left( x\right) \right)^{\frac{1}{n-1}} 
    	\end{equation*} 
    	for all $\left( x,y\right) \in V$.
    \end{lemma}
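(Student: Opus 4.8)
The plan is to combine three ingredients already in place: the Jacobian formula of Lemma~\ref{Le:2.2}, the first equation of the system defining $u$, and the matrix arithmetic--geometric mean inequality (Lemma~2.1). Throughout write $M(x,y):=D^{2}_{\Sigma}u(x)-\langle \Rmnum{2}(x),y\rangle$. The lower bound is immediate: for $(x,y)\in V$ one has $M(x,y)\geq0$ by the very definition of $V$, so Lemma~\ref{Le:2.2} gives $\mathrm{det}\,D\Phi(x,y)=\mathrm{det}\,M(x,y)\geq0$.

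For the upper bound I would first expand the left-hand side of the equation for $u$. A direct computation in a local coordinate system, using the Leibniz rule for $D_{\Sigma}$, yields the identity
\[
\mathrm{div}_{\Sigma}\bigl(A(\nabla^{\Sigma}u)\bigr)=\langle \mathrm{div}_{\Sigma}A,\nabla^{\Sigma}u\rangle+\langle A,D^{2}_{\Sigma}u\rangle ,
\]
where $D^{2}_{\Sigma}u$ is the intrinsic Hessian and the inner products are those introduced in Section~2. Plugging in the PDE and then pairing $A$ against $M(x,y)$, and using $\langle A,\langle \Rmnum{2},y\rangle\rangle=\langle\langle A,\Rmnum{2}\rangle,y\rangle$, I get
\[
\langle A,M(x,y)\rangle=n(\mathrm{det}\,A)^{\frac{1}{n-1}}-\sqrt{|\mathrm{div}_{\Sigma}A|^{2}+|\langle A,\Rmnum{2}\rangle|^{2}}-\Bigl(\langle \mathrm{div}_{\Sigma}A,\nabla^{\Sigma}u\rangle+\langle\langle A,\Rmnum{2}\rangle,y\rangle\Bigr),
\]
with all quantities evaluated at the appropriate point.

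Next I would bound the bracketed term from below. Since $(x,y)\in V\subset U$ we have $|\nabla^{\Sigma}u(x)|^{2}+|y|^{2}<1$, so applying Cauchy--Schwarz to each pairing and then once more to the resulting two-term sum gives
\[
\langle \mathrm{div}_{\Sigma}A,\nabla^{\Sigma}u\rangle+\langle\langle A,\Rmnum{2}\rangle,y\rangle\geq-\sqrt{|\mathrm{div}_{\Sigma}A|^{2}+|\langle A,\Rmnum{2}\rangle|^{2}}\cdot\sqrt{|\nabla^{\Sigma}u|^{2}+|y|^{2}}\geq-\sqrt{|\mathrm{div}_{\Sigma}A|^{2}+|\langle A,\Rmnum{2}\rangle|^{2}} .
\]
Inserting this into the previous identity, the two square-root terms cancel in our favour and leave $\langle A,M(x,y)\rangle\leq n(\mathrm{det}\,A(x))^{\frac{1}{n-1}}$. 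Finally, choosing an orthonormal basis of $T_{x}\Sigma$ so that $g_{\Sigma}$ becomes the identity, $A$ and $M(x,y)$ are represented by genuine symmetric matrices with $A$ positive definite and $M(x,y)\geq0$; in this frame $\mathrm{det}(A\circ M)=\mathrm{det}\,A\cdot\mathrm{det}\,M$ and $\langle A,M\rangle=\mathrm{tr}(A\circ M)$, so Lemma~2.1 gives
\[
\mathrm{det}\,A(x)\cdot\mathrm{det}\,M(x,y)\leq\Bigl(\frac{1}{n}\langle A,M(x,y)\rangle\Bigr)^{n}\leq(\mathrm{det}\,A(x))^{\frac{n}{n-1}} .
\]
Dividing by $\mathrm{det}\,A(x)>0$ and invoking Lemma~\ref{Le:2.2} once more yields $\mathrm{det}\,D\Phi(x,y)=\mathrm{det}\,M(x,y)\leq(\mathrm{det}\,A(x))^{\frac{1}{n-1}}$.

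The only delicate point I anticipate is the coordinate computation establishing the divergence identity, together with the bookkeeping of the normal-vector pairing $\langle A,\langle \Rmnum{2},y\rangle\rangle=\langle\langle A,\Rmnum{2}\rangle,y\rangle$ and the verification that $\langle A,M\rangle$ really equals the trace of the composition in an orthonormal frame; once these are settled, the remainder is a short chain of Cauchy--Schwarz estimates followed by a single application of Lemma~2.1.
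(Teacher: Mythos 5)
Your proposal is correct and follows essentially the same route as the paper: expand $\mathrm{div}_{\Sigma}(A(\nabla^{\Sigma}u))$, substitute the PDE, control the pairing term by Cauchy--Schwarz using $|\nabla^{\Sigma}u|^{2}+|y|^{2}<1$ to obtain $\langle A,M\rangle\leq n(\mathrm{det}\,A)^{\frac{1}{n-1}}$, and conclude via Lemma~2.1 together with the Jacobian formula of Lemma~\ref{Le:2.2}. The only cosmetic difference is that the paper performs the Cauchy--Schwarz step as a single inner product of $\mathrm{div}_{\Sigma}A+\langle A,\Rmnum{2}\rangle$ against $\nabla^{\Sigma}u+y$ in $\mathbb{R}^{n+m}$, whereas you estimate the tangential and normal pairings separately and then combine them; both yield the same bound.
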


    \begin{proof}
    Given a point $\left( x,y\right) \in V$,
    by Cauchy-Schwarz inequality and 
    $\left| \nabla^{\Sigma}u\left( x\right) \right|^{2}+\left| y\right| ^{2}<1$,
    we have
    \begin{equation*}
    	\begin{split}
    		&-\left\langle \mathrm{div}_{\Sigma}A\left( x\right), \nabla^{\Sigma}u\left( x\right)\right\rangle
    		- \left\langle A\left( x\right) , \left\langle \Rmnum{2}\left( x\right), y\right\rangle\right\rangle  \\
    		=&-\left\langle \mathrm{div}_{\Sigma}A\left( x\right), \nabla^{\Sigma}u\left( x\right)\right\rangle
    		- \left\langle \left\langle A\left( x\right) , \Rmnum{2}\left( x\right)\right\rangle , y\right\rangle\\
    		=&-\left\langle \mathrm{div}_{\Sigma}A\left( x\right)+\left\langle A\left( x\right) , \Rmnum{2}\left( x\right)\right\rangle, \nabla^{\Sigma}u\left( x\right)+y\right\rangle\\
    		\leq&\sqrt{\left| \nabla^{\Sigma}u\left( x\right)\right| ^{2}+\left| y\right| ^{2}}
    		\sqrt{\left| \mathrm{div}_{\Sigma}A\right| ^{2}\left( x\right)+\left| \left\langle A, \Rmnum{2}\right\rangle\right| ^{2}\left( x\right) }\\
    		\leq&\sqrt{\left| \mathrm{div}_{\Sigma}A\right| ^{2}\left( x\right)+\left| \left\langle A, \Rmnum{2}\right\rangle\right| ^{2}\left( x\right) }.
    	\end{split}
    \end{equation*}
    Note that 	
    \begin{equation*}
        \mathrm{div}_{\Sigma}\left( A\left( \nabla^{\Sigma}u\right) \right)
        =\left\langle \mathrm{div}_{\Sigma}A, \nabla^{\Sigma}u\right\rangle 
        + \left\langle A, D_{\Sigma}^{2}u\right\rangle.
    \end{equation*}
    Thus, by the equation of $u$, we obtain
    \begin{equation*}
    	\begin{split}
    		&\left\langle A\left( x\right), D_{\Sigma}^{2}u\left( x\right)-\left\langle \Rmnum{2}\left( x\right), y\right\rangle\right\rangle \\
    		=&\mathrm{div}_{\Sigma}\left( A\left( \nabla^{\Sigma}u\right) \right)\left( x\right) 
    		-\left\langle \mathrm{div}_{\Sigma}A\left( x\right), \nabla^{\Sigma}u\left( x\right)\right\rangle
    		-\left\langle A\left( x\right) , \left\langle \Rmnum{2}\left( x\right), y\right\rangle\right\rangle \\
    		=&n\left( \mathrm{det}A\left( x\right) \right) ^{\frac{1}{n-1}}
    		-\sqrt{\left| \mathrm{div}_{\Sigma}A\right| ^{2}\left( x\right)+\left| \left\langle A, \Rmnum{2}\right\rangle\right| ^{2}\left(x\right)}	\\
    		&-\left\langle \mathrm{div}_{\Sigma}A\left( x\right), \nabla^{\Sigma}u\left( x\right)\right\rangle
    		- \left\langle A\left( x\right), \left\langle \Rmnum{2}\left( x\right), y\right\rangle\right\rangle \\
    		\leq& n\left( \mathrm{det}A\left( x\right) \right) ^{\frac{1}{n-1}}.
    	\end{split}
    \end{equation*}
    Since $A>0$ and $D_{\Sigma}^{2}u\left( x\right)
    -\left\langle \Rmnum{2}\left( x\right), y\right\rangle\geq0$ for all $\left( x,y\right) \in V$,
    by Lemma 2.1, we have 
    \begin{equation*}
    	\begin{split}
    		&\mathrm{det}\left( D_{\Sigma}^{2}u\left( x\right)
    		-\left\langle \Rmnum{2}\left( x\right), y\right\rangle\right) \\
    		=&\frac{1}{\mathrm{det}A\left( x\right)}
    		\mathrm{det}\left[ A\left( x\right) \circ\left( D_{\Sigma}^{2}u\left( x\right)
    		-\left\langle \Rmnum{2}\left( x\right), y\right\rangle\right)\right] \\ 
    		\leq&\frac{1}{\mathrm{det}A\left( x\right)}
    		\left( \frac{\left\langle A\left( x\right),  D_{\Sigma}^{2}u\left( x\right)
    		-\left\langle \Rmnum{2}\left( x\right), y\right\rangle\right\rangle }{n}\right) ^{n}\\
    	   	\leq&\left( \mathrm{det}A\left( x\right)\right) 
    	   	^{\frac{1}{n-1}}.\\
    	\end{split}
    \end{equation*}
    By Lemma 3.2, this lemma follows.
    \end{proof}
\emph{Proof of Theorem 1.3.} Given a constant $\sigma$ such that $0\leq\sigma<1$,
    by area formula and Lemma 3.3, we have
    \begin{equation*}
    	\begin{split}
    		&\left| B^{n+m}\right|\left( 1-\sigma^{n+m}\right)   \\
    		=&\int_{\left\lbrace \xi\in\mathbb{R}^{n+m}:\sigma^{2}<\left| \xi\right|^{2}<1\right\rbrace }1 d\xi \\ 
    		\leq& \int_{\Omega}\left( 
    		\int_{\left\lbrace y\in T_{x}^{\bot}\Sigma:\sigma^{2}<\left| \Phi\left( x, y\right) \right|^{2}<1\right\rbrace } 
    		\left| \mathrm{det}D\Phi\left( x, y\right) \right| 1_{A}\left( x, y\right)
    		dy\right) d\mathrm{vol}\left( x\right) \\
    		\leq& \int_{\Omega}\left( 
    		\int_{\left\lbrace y\in T_{x}^{\bot}\Sigma:\sigma^{2}<\left| \nabla^{\Sigma}u\left( x\right) \right|^{2}+\left| y\right| ^{2}<1\right\rbrace } 
    		\left( \mathrm{det}A\left( x\right)\right) 
    		^{\frac{1}{n-1}}dy\right) d\mathrm{vol}\left( x\right) \\
    		\leq&\frac{m}{2}\left| B^{m}\right|\left( 1-\sigma^{2}\right) 
    		\int_{\Omega} \left( \mathrm{det}A\right) 
    		^{\frac{1}{n-1}} \\.
    	\end{split}
    \end{equation*}
    Dividing both side by $1-\sigma$ and letting $\sigma\rightarrow1^{-}$, we have
    \begin{equation}
    	\left( n+m\right)\left| B^{n+m}\right|\leq m\left| B^{m}\right| 
    	\int_{\Omega} \left( \mathrm{det}A\right) 
    	^{\frac{1}{n-1}}\leq
        m\left| B^{m}\right| 
    	\int_{\Sigma} \left( \mathrm{det}A\right) 
    	^{\frac{1}{n-1}}.
    \end{equation}
    From the scaling assumption $\eqref{eq:2.1}$, we obtain
    \begin{equation*}
   	    \begin{split}
   	    	&\int_{\Sigma}\sqrt{\left| \mathrm{div}_{\Sigma}A\right| ^{2}+\left| \left\langle A, \Rmnum{2}\right\rangle\right| ^{2}}
   	    	+\int_{\partial\Sigma}\left| A\left( \nu\right) \right| \\
   	    	=&n\int_{\Sigma} \left( \mathrm{det}A\right) 
   	    	^{\frac{1}{n-1}}\\ 
   	    	=&n\left( \int_{\Sigma} \left( \mathrm{det}A\right) 
   	    	^{\frac{1}{n-1}}\right) ^{\frac{1}{n}}
   	    	\left( \int_{\Sigma} \left( \mathrm{det}A\right) 
   	    	^{\frac{1}{n-1}}\right) ^{\frac{n-1}{n}}\\
   	    	\geq &n\left[ \frac{\left( n+m\right)\left| B^{n+m}\right|}{m\left| B^{m}\right|}\right] 
   	    	^{\frac{1}{n}}
   	    	\left( \int_{\Sigma} \left( \mathrm{det}A\right) 
   	    	^{\frac{1}{n-1}}\right) ^{\frac{n-1}{n}}.	
   	    \end{split}
    \end{equation*}
    Thus, the special case when $\Sigma$ is connected to Theorem 1.3 has been proved. If $\Sigma$ is disconnected,
    we can conclude that
    \begin{equation*}
		\int_{\Sigma}\sqrt{\left| \mathrm{div}_{\Sigma}A\right| ^{2} +\left| \left\langle A, \Rmnum{2}\right\rangle\right| ^{2}}
		+\int_{\partial\Sigma}\left| A\left( \nu\right) \right| 
		> n\left[ \frac{\left( n+m\right)\left| B^{n+m}\right|}{m\left| B^{m}\right|}\right] 
		^{\frac{1}{n}}
		\left( \int_{\Sigma} \left( \mathrm{det}A\right) 
	  	^{\frac{1}{n-1}}\right) ^{\frac{n-1}{n}}
    \end{equation*}    
    from the inequality 
    $a^{\frac{n-1}{n}}+b^{\frac{n-1}{n}}
    >\left( a+b\right) ^{\frac{n-1}{n}}$
    for $a, b>0$.
    This completes the proof of Theorem 1.3.\hfill$\Box$\\

\section{Proof of Theorem 1.4}
Suppose that $\Sigma^{n}$ is a compact $n$-dimensional submanifold of $\mathbb{R}^{n+2}$ with smooth boundary $\partial\Sigma$ (possibly  $\partial\Sigma=\varnothing$),  
and $A$ is a smooth symmetric uniformly positive define $\left( 0,2\right)$-tensor field on $\Sigma$ satisfying
\begin{equation*}
	\int_{\Sigma}\sqrt{\left| \mathrm{div}_{\Sigma}A\right|^{2}+\left| \left\langle A,\Rmnum{2}\right\rangle \right|^{2}}
	+\int_{\partial\Sigma}\left| A\left( \nu\right) \right| =
	n\left| B^{n}\right|^{\frac{1}{n}}\left( \int_{\Sigma}\left( \mathrm{det}A\right)^{\frac{1}{n-1}}\right) ^{\frac{n-1}{n}}.
\end{equation*}
We conclude that $\Sigma$ is connected from the last part of the proof of Theorem 1.3. 

In the sense of a difference of a positive factor of $A$, we may assume that 	
\begin{equation}
		\int_{\Sigma}\sqrt{\left| \mathrm{div}_{\Sigma}A\right|^{2}+\left| \left\langle A,\Rmnum{2}\right\rangle \right|^{2}}
		+\int_{\partial\Sigma}\left| A\left( \nu\right) \right| =
		n \int_{\Sigma}\left( \mathrm{det}A\right) ^{\frac{1}{n-1}}
	\end{equation}
and 
	\begin{equation}
\int_{\Sigma}	\left( \mathrm{det}A\right) ^{\frac{1}{n-1}}=|B^{n}|.
		\end{equation}
Let $u$, $\Omega$, $U$, $V$ and $\Phi:U\rightarrow \mathbb{R}^{n+2}$	be defined as in Section 3. Combining
(3.3) and (4.2), we have 
    \begin{equation*}
	\left( n+2\right)\left| B^{n+2}\right|\leq 2\left| B^{2}\right| 
	\int_{\Omega} \left( \mathrm{det}A\right) 
	^{\frac{1}{n-1}}\leq
	2\left| B^{2}\right| 
	\int_{\Sigma} \left( \mathrm{det}A\right) 
	^{\frac{1}{n-1}}=\left( n+2\right)\left| B^{n+2}\right|.
\end{equation*}
Thus, we conclude the following lemma.
\begin{lemma}
$\Omega$ is dense in $\Sigma$. Moreover, $\Omega=\Sigma\backslash\partial \Sigma$ and $|\nabla^{\Sigma}u(x)|=1$ for all $x\in \partial \Sigma$.
\end{lemma}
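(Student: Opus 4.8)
The plan is to extract everything from the equality in the displayed chain $(n+2)|B^{n+2}|\le 2|B^2|\int_{\Omega}(\det A)^{\frac1{n-1}}\le 2|B^2|\int_{\Sigma}(\det A)^{\frac1{n-1}}=(n+2)|B^{n+2}|$ that precedes the lemma. First I would observe that both inequalities are now equalities, so $\int_{(\Sigma\setminus\partial\Sigma)\setminus\Omega}(\det A)^{\frac1{n-1}}=0$; since $A$ is uniformly positive definite, $(\det A)^{\frac1{n-1}}$ is bounded below by a positive constant, hence $(\Sigma\setminus\partial\Sigma)\setminus\Omega$ is a null set. Because $u\in C^{2,\gamma}(\Sigma)$, the map $x\mapsto|\nabla^{\Sigma}u(x)|$ is continuous, so $\Omega$ is open and its complement in $\Sigma\setminus\partial\Sigma$ is relatively closed; a relatively closed null set has empty interior, so $\Omega$ is dense in $\Sigma\setminus\partial\Sigma$, and therefore in $\Sigma$. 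The same remark shows that the open set $\{x\in\Sigma\setminus\partial\Sigma:|\nabla^{\Sigma}u(x)|>1\}$, being contained in that null set, is empty, so $|\nabla^{\Sigma}u|\le 1$ on $\Sigma\setminus\partial\Sigma$ and, by continuity, on all of $\Sigma$.

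Next I would prove the boundary identity using the boundary condition $\langle A(\nabla^{\Sigma}u),\nu\rangle=|A(\nu)|$ from the equation defining $u$: since $A$ is symmetric, $|A(\nu)|=\langle A(\nabla^{\Sigma}u),\nu\rangle=\langle\nabla^{\Sigma}u,A(\nu)\rangle\le|\nabla^{\Sigma}u|\,|A(\nu)|$ by Cauchy--Schwarz, and $|A(\nu)|>0$ because $A>0$; hence $|\nabla^{\Sigma}u|\ge 1$ on $\partial\Sigma$, and combined with the bound from the previous paragraph, $|\nabla^{\Sigma}u|=1$ on $\partial\Sigma$.

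The remaining assertion $\Omega=\Sigma\setminus\partial\Sigma$ — equivalently, $|\nabla^{\Sigma}u|<1$ at every interior point — is the delicate step, and I expect it to be the main obstacle. The plan is to upgrade the asymptotic equality in the area-formula estimate of Section 3 to pointwise information over the two-dimensional normal fibres, which is exactly where the codimension being $2$ enters: writing $E_{\sigma}(x)=\{y\in T^{\perp}_{x}\Sigma:\sigma^{2}<|\nabla^{\Sigma}u(x)|^{2}+|y|^{2}<1\}$, this is an annulus in $\mathbb{R}^{2}$ with $|E_{\sigma}(x)|=|B^{2}|(1-\sigma^{2})$ as soon as $|\nabla^{\Sigma}u(x)|^{2}<\sigma^{2}$. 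Dividing the chain of Section 3 by $1-\sigma$ and letting $\sigma\to1^{-}$, the two outer quantities have the same limit, so the defects in the intermediate inequalities (those coming from Lemma 3.3 and from $|E_{\sigma}(x)|\le|B^{2}|(1-\sigma^{2})$) tend to $0$; a Fatou-type argument, together with the continuity of $y\mapsto\det\big(D^{2}_{\Sigma}u(x)-\langle\Rmnum{2}(x),y\rangle\big)\,1_{V}(x,y)$ on the fibre, should then force, for a.e. $x\in\Omega$ and every $y$ on the circle $|y|^{2}=1-|\nabla^{\Sigma}u(x)|^{2}$, that $D^{2}_{\Sigma}u(x)-\langle\Rmnum{2}(x),y\rangle\ge0$ and $\det\big(D^{2}_{\Sigma}u(x)-\langle\Rmnum{2}(x),y\rangle\big)=(\det A(x))^{\frac1{n-1}}$. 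By the equality case of Lemma 2.1 this means $A(x)\circ\big(D^{2}_{\Sigma}u(x)-\langle\Rmnum{2}(x),y\rangle\big)=\lambda\,g_{\Sigma}$ with $\lambda=(\det A(x))^{\frac1{n-1}}$, i.e. $D^{2}_{\Sigma}u(x)-\langle\Rmnum{2}(x),y\rangle=(\det A(x))^{\frac1{n-1}}A(x)^{-1}$ for all such $y$; since the left side is affine in $y$ while the right side is independent of $y$ and the circle spans $T^{\perp}_{x}\Sigma$, this yields $\Rmnum{2}\equiv0$ and $D^{2}_{\Sigma}u=(\det A)^{\frac1{n-1}}A^{-1}>0$ on $\Sigma$ (using density and continuity). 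Hence $\Sigma$ is totally geodesic, so, being connected and compact, a compact domain in an $n$-plane $\mathbb{R}^{n}\subset\mathbb{R}^{n+2}$, and $u$ is strictly convex; then $\nabla^{\Sigma}u$ is an injective immersion of $\Sigma$ into $\overline{B^{n}}$, hence an embedding onto a compact $n$-dimensional domain whose boundary lies in $\partial B^{n}$, which by connectedness forces $\nabla^{\Sigma}u(\Sigma)=\overline{B^{n}}$ and $\nabla^{\Sigma}u(\Sigma\setminus\partial\Sigma)=B^{n}$; in particular $|\nabla^{\Sigma}u|<1$ on $\Sigma\setminus\partial\Sigma$, i.e. $\Omega=\Sigma\setminus\partial\Sigma$. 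The genuinely technical point is making the passage from the $\sigma\to1$ asymptotics to these fibrewise equalities rigorous — evaluating the limit of the integrals over the shrinking annuli $E_{\sigma}(x)$ and handling the indicator $1_{V}$ near the boundary circle of each fibre — and I note that this route in fact carries out much of the equality analysis of Theorem 1.4 at the same time.
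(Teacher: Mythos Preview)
Your density argument and the bound $|\nabla^{\Sigma}u|\le1$ match what the paper's one-line proof (``Thus, we conclude the following lemma'') extracts from the equality chain. Your Cauchy--Schwarz use of the boundary condition $\langle A(\nabla^{\Sigma}u),\nu\rangle=|A(\nu)|$ to obtain $|\nabla^{\Sigma}u|\ge1$ on $\partial\Sigma$ is correct and is a direct step the paper never makes explicit. For the remaining claim $\Omega=\Sigma\setminus\partial\Sigma$, your third-paragraph plan is correct and coincides in content with the paper's Lemmas~4.2--4.4; in fact the paper defers that claim to those later lemmas --- its ``proof'' of Lemma~4.1 is only the equality chain, which strictly speaking yields only density --- and once Lemma~4.4 gives $D^{2}_{\Sigma}u>0$ on $\Sigma$, the map $\nabla^{\Sigma}u$ is open on the interior, so $\nabla^{\Sigma}u(\Sigma\setminus\partial\Sigma)$ is an open subset of $\overline{B^{n}}$, hence contained in $B^{n}$, giving $\Omega=\Sigma\setminus\partial\Sigma$ with no need for injectivity. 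The technical passage you flag (from the $\sigma\to1$ asymptotics to fibrewise equalities) is handled in the paper not by a Fatou-type limit but by contradiction (Lemmas~4.2--4.3): if the fibrewise equality failed at some $(\bar x,\bar y)$ with $|\nabla^{\Sigma}u(\bar x)|^{2}+|\bar y|^{2}=1$, one would have $\det D\Phi\le(1-\varepsilon)(\det A)^{1/(n-1)}$ on a neighbourhood in the normal bundle, and inserting this into the $\sigma\to1$ estimate produces a strict deficit contradicting~(4.2). That device is a cleaner way to make your limiting step rigorous. In sum, your approach is correct; the substantive novelty over the paper is your boundary argument, and the rest is the paper's own Lemmas~4.2--4.5 carried out inside Lemma~4.1 rather than afterwards --- which, as you note, amounts to doing most of the equality analysis of Theorem~1.4 at the same time.
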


\begin{lemma}
	Suppose that $\bar{x}\in\Omega$ and $\bar{y}\in T^{\perp}_{\bar{x}}\Sigma$ satisfying $\left| \nabla^{\Sigma}u\left( \bar{x}\right) \right|^{2}+\left| \bar{y}\right| ^{2}=1$. If the smallest eigenvalue of $D_{\Sigma}^{2}u\left( \bar{x}\right)
	-\left\langle \Rmnum{2}\left( \bar{x}\right), \bar{y}\right\rangle$ is negative or $\mathrm{det}\left(D_{\Sigma}^{2}u\left( \bar{x}\right)
	-\left\langle \Rmnum{2}\left( \bar{x}\right), \bar{y}\right\rangle\right)<  \left( \mathrm{det}A(\bar{x})\right) ^{\frac{1}{n-1}} $ when $D_{\Sigma}^{2}u\left( \bar{x}\right)
	-\left\langle \Rmnum{2}\left( \bar{x}\right), \bar{y}\right\rangle\geq 0$, then there exists $\epsilon\in (0,1)$, $\delta\in (0,1)$, an open neighborhood $W$ of $\bar{x}$ in $\Sigma\backslash\partial \Sigma$ and $Z:=\{ \left( x,y\right): x\in W,
	y\in T_{x}^{\bot}\Sigma,
	1-\delta<\left| \nabla^{\Sigma}u\left( x\right) \right|^{2}+\left| y\right| ^{2}<1+\delta\}$ such that 
	\begin{equation*}
	\mathrm{det}D\Phi\left( x, y\right) 
	\leq (1-\epsilon)\left( \mathrm{det}A\left( x\right) \right)^{\frac{1}{n-1}} 
	\end{equation*}
	for all $(x,y)\in Z\cap V$.
	\end{lemma}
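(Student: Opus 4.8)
The plan is to argue by continuity, exploiting that the quantities in question depend continuously on $(x,y)$, and that the sequence of inequalities in Lemma 3.3 collapses to equalities only on a closed condition. Fix the point $(\bar x, \bar y)$ as in the statement. By Lemma 3.2, $\mathrm{det}D\Phi(\bar x,\bar y)=\mathrm{det}\big(D^2_\Sigma u(\bar x)-\langle \mathrm{II}(\bar x),\bar y\rangle\big)$. First I would treat the two hypotheses separately. If the smallest eigenvalue of $M(\bar x,\bar y):=D^2_\Sigma u(\bar x)-\langle \mathrm{II}(\bar x),\bar y\rangle$ is strictly negative, then by continuity of the map $(x,y)\mapsto$ (smallest eigenvalue of $M(x,y)$) there is an open neighborhood of $(\bar x,\bar y)$ on which that eigenvalue stays negative; hence on that neighborhood $(x,y)\notin V$, so $Z\cap V=\varnothing$ after shrinking, and the asserted inequality holds vacuously. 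So the substantive case is when $M(\bar x,\bar y)\geq 0$ but $\mathrm{det}\,M(\bar x,\bar y)<(\mathrm{det}\,A(\bar x))^{1/(n-1)}$.

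In that case I would revisit the chain of estimates in the proof of Lemma 3.3 and quantify the gap. There the bound $\mathrm{det}\,M(x,y)\le(\mathrm{det}\,A(x))^{1/(n-1)}$ came from three successive inequalities evaluated at a point of $V$ with $|\nabla^\Sigma u(x)|^2+|y|^2<1$: the Cauchy–Schwarz step with factor $\sqrt{|\nabla^\Sigma u|^2+|y|^2}$, the discarded boundary-type term, and the matrix AM–GM (Lemma 2.1). At $(\bar x,\bar y)$ the constraint $|\nabla^\Sigma u(\bar x)|^2+|\bar y|^2=1$ is saturated, so the first slack vanishes there; the strict inequality $\mathrm{det}\,M(\bar x,\bar y)<(\mathrm{det}\,A(\bar x))^{1/(n-1)}$ forces a strictly positive gap somewhere in the remaining steps—either the equality case of Lemma 2.1 fails (i.e.\ $A(\bar x)\circ M(\bar x,\bar y)$ is not a multiple of $g_\Sigma$), or the PDE identity combined with Cauchy–Schwarz is strict. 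Concretely, define the continuous function
\begin{equation*}
F(x,y):=(\mathrm{det}\,A(x))^{\frac{1}{n-1}}-\mathrm{det}D\Phi(x,y)
\end{equation*}
on $\overline U$ where $M(x,y)\ge 0$; one shows $F(\bar x,\bar y)>0$. By continuity there is a neighborhood $W$ of $\bar x$, a $\delta\in(0,1)$, and $Z$ as in the statement, together with a constant $c>0$, such that $F(x,y)\ge c$ for all $(x,y)\in Z$ with $M(x,y)\ge 0$—in particular for all $(x,y)\in Z\cap V$. Since $(\mathrm{det}\,A(x))^{1/(n-1)}$ is bounded above on $\overline W$ (compactness, $A$ smooth) and bounded below by a positive constant (uniform positivity of $A$), the additive gap $c$ converts into a multiplicative one: choosing $\epsilon:=c/\max_{\overline W}(\mathrm{det}\,A)^{1/(n-1)}\in(0,1)$ gives $\mathrm{det}D\Phi(x,y)\le(1-\epsilon)(\mathrm{det}\,A(x))^{1/(n-1)}$ on $Z\cap V$, possibly after shrinking $W$ and $\delta$ so that $\epsilon<1$.

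The step I expect to be the main obstacle is establishing the \emph{strict} inequality $F(\bar x,\bar y)>0$ cleanly, i.e.\ showing that the hypothesis $\mathrm{det}\,M(\bar x,\bar y)<(\mathrm{det}\,A(\bar x))^{1/(n-1)}$ (when $M(\bar x,\bar y)\ge 0$) is not somehow compensated elsewhere when the constraint is saturated. The subtlety is that at $(\bar x,\bar y)$ the boundary $|\nabla^\Sigma u|^2+|y|^2=1$ is reached, so the inequality $\sqrt{|\nabla^\Sigma u(\bar x)|^2+|\bar y|^2}\le 1$ used in Lemma 3.3 is now an equality and contributes no slack; the entire chain must be re-examined \emph{at the boundary}, and one must verify that $\mathrm{det}D\Phi(\bar x,\bar y)\le(\mathrm{det}\,A(\bar x))^{1/(n-1)}$ still holds there (it does, by the same computation, since that computation only used $|\nabla^\Sigma u(x)|^2+|y|^2\le 1$), and then that the displayed hypothesis is exactly the statement that this is strict. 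Once $F(\bar x,\bar y)>0$ is in hand, the rest is a routine continuity-and-compactness packaging; I would be careful only to shrink $W$ and $\delta$ a finite number of times and to keep $Z$ of the prescribed form.
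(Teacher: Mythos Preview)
Your approach---a continuity argument using $u\in C^{2}$ and Lemma~3.2---is exactly the paper's one-line proof, but in spelling it out you have exposed a genuine gap. The issue is the \emph{shape} of $Z$: by definition $Z$ is an annular region over $W$, containing for each $x\in W$ \emph{all} normal vectors $y$ with $1-\delta<|\nabla^{\Sigma}u(x)|^{2}+|y|^{2}<1+\delta$. Since $\bar x\in\Omega$ and the normal fiber $T_{\bar x}^{\perp}\Sigma$ is two-dimensional, this is a full annulus in the fiber, not just the $y$ near $\bar y$. Shrinking $W$ and $\delta$ forces $x$ near $\bar x$ and $|y|$ near $|\bar y|$, but does \emph{nothing} to force $y$ close to $\bar y$; in particular $(\bar x,-\bar y)$ lies in $\overline{Z}$ for every choice of $W,\delta$. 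Hence your Case~1 claim that ``$Z\cap V=\varnothing$ after shrinking'' fails: $M(\bar x,-\bar y)=D_{\Sigma}^{2}u(\bar x)+\langle\Rmnum{2}(\bar x),\bar y\rangle$ may well be nonnegative even when $M(\bar x,\bar y)$ has a negative eigenvalue. The same defect appears in Case~2: continuity of $F$ at the single point $(\bar x,\bar y)$ yields $F\ge c$ only on a ball around $(\bar x,\bar y)$ in the total space of the normal bundle, not on the annulus $Z$; your sentence ``by continuity there is \ldots\ $Z$ as in the statement \ldots\ such that $F(x,y)\ge c$ for all $(x,y)\in Z$ with $M(x,y)\ge 0$'' asserts exactly what must be proved.

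To repair this you must either control $\det D\Phi$ uniformly over the whole circle $\{y\in T^{\perp}_{\bar x}\Sigma:|\nabla^{\Sigma}u(\bar x)|^{2}+|y|^{2}=1\}$---which requires more than pointwise continuity at $(\bar x,\bar y)$---or else weaken the conclusion so that $Z$ is replaced by a genuine neighborhood of $(\bar x,\bar y)$ in $T^{\perp}\Sigma$. The latter is what your continuity argument actually establishes, and it still suffices for the contradiction in Lemma~4.3: an $\epsilon$-improvement on a positive-measure arc of the $y$-annulus (rather than the full annulus) already produces a strictly positive defect after dividing by $1-\sigma$ and sending $\sigma\to 1^{-}$, since the relevant arc has measure bounded below independently of $\sigma$.
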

	\begin{proof}
	Clearly, this lemma follows from $u\in C^{2}(\Sigma)$ and Lemma 3.2.
	\end{proof}
	
	\begin{lemma}
	Suppose that $\bar{x}\in\Omega$ and $\bar{y}\in T^{\perp}_{\bar{x}}\Sigma$ satisfying $\left| \nabla^{\Sigma}u\left( \bar{x}\right) \right|^{2}+\left| \bar{y}\right| ^{2}=1$. Then $D_{\Sigma}^{2}u\left( \bar{x}\right)
	-\left\langle \Rmnum{2}\left( \bar{x}\right), \bar{y}\right\rangle\geq 0$ and 	$\mathrm{det}\left(D_{\Sigma}^{2}u\left( \bar{x}\right)
	-\left\langle \Rmnum{2}\left( \bar{x}\right), \bar{y}\right\rangle\right)=  \left( \mathrm{det}A(\bar{x})\right) ^{\frac{1}{n-1}}$.	
	\end{lemma}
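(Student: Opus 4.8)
The plan is to run a localized version of the equality analysis behind Theorem 1.3 and argue by contradiction. Suppose one of the two alternatives in the hypothesis of Lemma 4.2 held at $(\bar{x},\bar{y})$. Then Lemma 4.2 would furnish $\epsilon,\delta\in(0,1)$, an open neighbourhood $W$ of $\bar{x}$ and the shell $Z$ on which $\det D\Phi\le(1-\epsilon)(\det A)^{\frac{1}{n-1}}$ on $Z\cap V$; after shrinking we may assume $W\subseteq\Omega$ (note $\bar{x}\in\Omega$ forces $|\nabla^{\Sigma}u(\bar{x})|<1$, hence $|\bar{y}|^{2}=1-|\nabla^{\Sigma}u(\bar{x})|^{2}>0$, so this shrinking is harmless). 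The point is that a uniform $\epsilon$-improvement of the Jacobian bound on a set of positive measure sitting precisely against the sphere $\{|\nabla^{\Sigma}u|^{2}+|y|^{2}=1\}$ must create a strict loss in the area-formula estimate used to prove Theorem 1.3, contradicting the assumed equality.

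Concretely, I would repeat the chain of inequalities from the proof of Theorem 1.3, but taking $\sigma$ with $\sqrt{1-\delta}<\sigma<1$ so that $\{(x,y)\in V:\ x\in W,\ \sigma^{2}<|\Phi(x,y)|^{2}<1\}\subseteq Z\cap V$, and splitting the integration over $V\cap\{\sigma^{2}<|\Phi|^{2}<1\}$ into the part with $x\in W$ and the part with $x\in\Omega\setminus W$. On the first part one uses the bound of Lemma 4.2, on the second part Lemma 3.3, and on each part the slicing estimate $\mathrm{vol}\{y\in T_{x}^{\bot}\Sigma:\ \sigma^{2}<|\nabla^{\Sigma}u(x)|^{2}+|y|^{2}<1\}\le\tfrac{m}{2}|B^{m}|(1-\sigma^{2})$ already used for Theorem 1.3; this gives
\[
|B^{n+m}|(1-\sigma^{n+m})\ \le\ \tfrac{m}{2}|B^{m}|(1-\sigma^{2})\Big(\int_{\Omega}(\det A)^{\frac{1}{n-1}}-\epsilon\int_{W}(\det A)^{\frac{1}{n-1}}\Big).
\]
Dividing by $1-\sigma$ and letting $\sigma\to1^{-}$ yields, for $m=2$, $(n+2)|B^{n+2}|\le 2|B^{2}|\big(\int_{\Sigma}(\det A)^{\frac{1}{n-1}}-\epsilon\int_{W}(\det A)^{\frac{1}{n-1}}\big)$. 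Since $A$ is uniformly positive definite and $W$ is a nonempty open set, $\int_{W}(\det A)^{\frac{1}{n-1}}>0$, which contradicts the equality $(n+2)|B^{n+2}|=2|B^{2}|\int_{\Sigma}(\det A)^{\frac{1}{n-1}}$ established at the beginning of this section. Hence neither alternative of Lemma 4.2 can hold at $(\bar{x},\bar{y})$, so $D_{\Sigma}^{2}u(\bar{x})-\langle\Rmnum{2}(\bar{x}),\bar{y}\rangle\ge0$ and $\det\big(D_{\Sigma}^{2}u(\bar{x})-\langle\Rmnum{2}(\bar{x}),\bar{y}\rangle\big)\ge(\det A(\bar{x}))^{\frac{1}{n-1}}$; in particular this matrix is positive definite.

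It remains only to prove the reverse inequality $\det\big(D_{\Sigma}^{2}u(\bar{x})-\langle\Rmnum{2}(\bar{x}),\bar{y}\rangle\big)\le(\det A(\bar{x}))^{\frac{1}{n-1}}$, and for this I would rerun the pointwise computation of Lemma 3.3 at $(\bar{x},\bar{y})$. The only use of $(x,y)\in U$ in that proof was the Cauchy–Schwarz step $-\langle\mathrm{div}_{\Sigma}A+\langle A,\Rmnum{2}\rangle,\ \nabla^{\Sigma}u+y\rangle\le|\nabla^{\Sigma}u+y|\sqrt{|\mathrm{div}_{\Sigma}A|^{2}+|\langle A,\Rmnum{2}\rangle|^{2}}$, and since $|\nabla^{\Sigma}u(\bar{x})+\bar{y}|^{2}=|\nabla^{\Sigma}u(\bar{x})|^{2}+|\bar{y}|^{2}=1$ the factor $|\nabla^{\Sigma}u(\bar{x})+\bar{y}|$ is still $\le1$; together with the equation for $u$ (valid on $\Sigma\setminus\partial\Sigma\ni\bar{x}$) this yields $\langle A(\bar{x}),\ D_{\Sigma}^{2}u(\bar{x})-\langle\Rmnum{2}(\bar{x}),\bar{y}\rangle\rangle\le n(\det A(\bar{x}))^{\frac{1}{n-1}}$, and then Lemma 2.1 applied to $A(\bar{x})>0$ and $D_{\Sigma}^{2}u(\bar{x})-\langle\Rmnum{2}(\bar{x}),\bar{y}\rangle\ge0$ finishes the estimate. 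Combining with the previous paragraph gives the claimed equality.

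The step I expect to be the main obstacle is the bookkeeping in the re-run of the Theorem 1.3 estimate: one must check that pushing $\sigma$ close to $1$ genuinely forces the annular region over $W$ into the shell $Z$, that the splitting requires only an upper bound on each Jacobian (so no lower bounds and no injectivity of $\Phi$ beyond $\Phi(V)=B^{n+m}$ are needed), and — most importantly — that the $\epsilon$-gain on $W$ survives division by $1-\sigma$. It does, because the slicing bound $\tfrac{m}{2}|B^{m}|(1-\sigma^{2})$ is linear in $1-\sigma$ just like $|B^{n+m}|(1-\sigma^{n+m})$, so the gain does not vanish in the limit. Everything else is a verbatim use of Lemmas 2.1, 3.2, 3.3 and 4.2.
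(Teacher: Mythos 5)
Your proof is correct and follows essentially the same route as the paper: a contradiction via Lemma 4.2, feeding the $\epsilon$-improved Jacobian bound on the shell $Z$ back into the area-formula computation of Theorem 1.3 to contradict the equality $(n+2)\left|B^{n+2}\right|=2\left|B^{2}\right|\int_{\Sigma}(\mathrm{det}A)^{\frac{1}{n-1}}$. You are in fact slightly more complete than the paper, whose proof of this lemma only yields $\mathrm{det}\left(D_{\Sigma}^{2}u(\bar{x})-\left\langle \Rmnum{2}(\bar{x}),\bar{y}\right\rangle\right)\geq(\mathrm{det}A(\bar{x}))^{\frac{1}{n-1}}$ and defers the reverse inequality (your re-run of Lemma 3.3 using only $\left|\nabla^{\Sigma}u(\bar{x})\right|^{2}+\left|\bar{y}\right|^{2}=1$) to the proof of Lemma 4.4.
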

	\begin{proof}
	We argue by contradiction. Assume that the smallest eigenvalue of $D_{\Sigma}^{2}u\left( \bar{x}\right)
	-\left\langle \Rmnum{2}\left( \bar{x}\right), \bar{y}\right\rangle$ is negative or $\mathrm{det}\left(D_{\Sigma}^{2}u\left( \bar{x}\right)
	-\left\langle \Rmnum{2}\left( \bar{x}\right), \bar{y}\right\rangle\right)<  \left( \mathrm{det}A(\bar{x})\right) ^{\frac{1}{n-1}} $ when $D_{\Sigma}^{2}u\left( \bar{x}\right)
	-\left\langle \Rmnum{2}\left( \bar{x}\right), \bar{y}\right\rangle\geq 0$. By Lemma 4.2, there exists $\epsilon\in (0,1)$, $\delta\in (0,1)$, an open neighborhood $W$ of $\bar{x}$ in $\Sigma\backslash\partial \Sigma$ and $Z:=\{ \left( x,y\right): x\in W,
	y\in T_{x}^{\bot}\Sigma,
	1-\delta<\left| \nabla^{\Sigma}u\left( x\right) \right|^{2}+\left| y\right| ^{2}<1+\delta\}$ such that 
	\begin{equation*}
		\mathrm{det}D\Phi\left( x, y\right) 
		\leq (1-\epsilon)\left( \mathrm{det}A\left( x\right) \right)^{\frac{1}{n-1}} 
	\end{equation*}
	for all $(x,y)\in Z\cap V$. Using Lemma 3.3, we deduce that 
		\begin{equation*}
		0\leq\mathrm{det}D\Phi\left( x, y\right) 
		\leq (1-\epsilon\cdot 1_{Z}(x, y))\left( \mathrm{det}A\left( x\right) \right)^{\frac{1}{n-1}} 
	\end{equation*}
		for all $(x,y)\in V$.
Similar to the proof of Theorem 1.3, given a constant $\sigma$ such that $1-\delta<\sigma^{2}<1$, we have	
\begin{align*}
			&\left| B^{n+2}\right|\left( 1-\sigma^{n+2}\right)   \\
			=&\int_{\left\lbrace \xi\in\mathbb{R}^{n+2}:\sigma^{2}<\left| \xi\right|^{2}<1\right\rbrace }1 d\xi \\ 
			\leq& \int_{\Omega}\left( 
			\int_{\left\lbrace y\in T_{x}^{\bot}\Sigma:\sigma^{2}<\left| \Phi\left( x, y\right) \right|^{2}<1\right\rbrace } 
			\left| \mathrm{det}D\Phi\left( x, y\right) \right| 1_{A}\left( x, y\right)
			dy\right) d\mathrm{vol}\left( x\right) \\
			\leq& \int_{\Omega}\left( 
\int_{\left\lbrace y\in T_{x}^{\bot}\Sigma:\sigma^{2}<\left| \nabla^{\Sigma}u\left( x\right) \right|^{2}+\left| y\right| ^{2}<1\right\rbrace }
(1-\epsilon\cdot 1_{Z}(x, y))
\left( \mathrm{det}A\left( x\right) \right)^{\frac{1}{n-1}} dy\right) d\mathrm{vol}\left( x\right) \\
			=&| B^{2}|\int_{\Omega}\left[(1-\left| \nabla^{\Sigma}u\left( x\right) \right|^{2})-(\sigma^{2}-\left| \nabla^{\Sigma}u\left( x\right) \right|^{2})_{+}   \right]\left( \mathrm{det}A\left( x\right) \right)^{\frac{1}{n-1}}d\mathrm{vol}\left( x\right)\\
			&-\epsilon| B^{2}|\int_{\Omega\cap W}\left[(1-\left| \nabla^{\Sigma}u\left( x\right) \right|^{2})-(\sigma^{2}-\left| \nabla^{\Sigma}u\left( x\right) \right|^{2})_{+}   \right]\left( \mathrm{det}A\left( x\right) \right)^{\frac{1}{n-1}}d\mathrm{vol}\left( x\right)\\
			=&| B^{2}|\int_{\Omega}\left[(1-\left| \nabla^{\Sigma}u\left( x\right) \right|^{2})-(\sigma^{2}-\left| \nabla^{\Sigma}u\left( x\right) \right|^{2})_{+}   \right]\left(1-\epsilon 1_{W} (x) \right)\left( \mathrm{det}A\left( x\right) \right)^{\frac{1}{n-1}}d\mathrm{vol}\left( x\right)\\
			\leq&\left| B^{2}\right|\left( 1-\sigma^{2}\right) 
			\int_{\Omega} \left( \mathrm{det}A\right) 
			^{\frac{1}{n-1}}-\epsilon\left| B^{2}\right|\left( 1-\sigma^{2}\right) 
			\int_{\Omega\cap W} \left( \mathrm{det}A\right) 
			^{\frac{1}{n-1}} .
\end{align*}
	 Dividing both side by $1-\sigma$ and letting $\sigma\rightarrow1^{-}$, we have
	\begin{equation*}
				\begin{split}
		\left( n+2\right)\left| B^{n+2}\right|&\leq 2\left| B^{2}\right| 
		\int_{\Omega} \left( \mathrm{det}A\right) 
		^{\frac{1}{n-1}}  -2\epsilon\left| B^{2}\right|\int_{\Omega\cap W} \left( \mathrm{det}A\right) 
		^{\frac{1}{n-1}}\\
		&\leq
		2\left| B^{2}\right| 
		\int_{\Sigma} \left( \mathrm{det}A\right) 
		^{\frac{1}{n-1}} -2\epsilon\left| B^{2}\right|\int_{\Omega\cap W} \left( \mathrm{det}A\right) 
		^{\frac{1}{n-1}}\\
		&=\left( n+2\right)\left| B^{n+2}\right|-2\epsilon\left| B^{2}\right|\int_{\Omega\cap W} \left( \mathrm{det}A\right) 
		^{\frac{1}{n-1}}.
	\end{split}
	\end{equation*}
Since $\Omega\cap W$ is a nonempty open set in $\Sigma$ and $A>0$, it's a contradiction. Therefore, the lemma follows.
			\end{proof}

	\begin{lemma}
	$\Rmnum{2}\left( x\right)\equiv0$, $\mathrm{div}_{\Sigma}A(x)\equiv 0$,  $D_{\Sigma}^{2}u\left( x\right)
	> 0$ ,	$\mathrm{det}\left(D_{\Sigma}^{2}u\left( x\right)
\right)=  \left( \mathrm{det}A(x)\right) ^{\frac{1}{n-1}}$ and $A(x)=\mathrm{cof} D^{2}_{\Sigma}u(x)$	  for all $x\in \Sigma$. Moreover, $u\in C^{\infty}(\Sigma)$.
	\end{lemma}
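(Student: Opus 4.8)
\emph{Proof proposal.} The plan is to read off all of the asserted rigidity from the equality cases that are implicit in the proofs of Lemma 3.3 and Lemma 4.3, the key point being that here the normal bundle has rank exactly $m=2$. First I would fix $\bar x\in\Omega=\Sigma\backslash\partial\Sigma$ and $\bar y\in T^{\perp}_{\bar x}\Sigma$ with $|\nabla^{\Sigma}u(\bar x)|^{2}+|\bar y|^{2}=1$, and set $M(\bar y):=D_{\Sigma}^{2}u(\bar x)-\langle\Rmnum{2}(\bar x),\bar y\rangle$. By Lemma 4.3, $M(\bar y)\geq0$ and $\det M(\bar y)=(\det A(\bar x))^{\frac{1}{n-1}}>0$; so, re-running the chain of (in)equalities from the proof of Lemma 3.3 at the boundary point $(\bar x,\bar y)$ — where now $\sqrt{|\nabla^{\Sigma}u(\bar x)|^{2}+|\bar y|^{2}}=1$ — every inequality there must hold with equality. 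Equality in the Cauchy--Schwarz step forces
\[
\mathrm{div}_{\Sigma}A(\bar x)+\langle A,\Rmnum{2}\rangle(\bar x)=-\mu(\bar x)\bigl(\nabla^{\Sigma}u(\bar x)+\bar y\bigr),\qquad \mu(\bar x):=\sqrt{|\mathrm{div}_{\Sigma}A(\bar x)|^{2}+|\langle A,\Rmnum{2}\rangle(\bar x)|^{2}},
\]
while equality in Lemma 2.1 forces $A(\bar x)\circ M(\bar y)=\lambda(\bar y)\,g_{\Sigma}$, i.e.\ $M(\bar y)=\lambda(\bar y)A(\bar x)^{-1}$ with $\lambda(\bar y)>0$. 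Note that $\mu(\bar x)$ does not depend on $\bar y$.

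Next I would exploit that, since $m=2$ and $\bar x\in\Omega$, the admissible $\bar y$ sweep out a \emph{circle} of positive radius $\sqrt{1-|\nabla^{\Sigma}u(\bar x)|^{2}}$ in the $2$-plane $T^{\perp}_{\bar x}\Sigma$. Taking the normal part of the displayed identity, $\langle A,\Rmnum{2}\rangle(\bar x)=-\mu(\bar x)\bar y$ for every $\bar y$ on this circle; as the left side is fixed and $\mu(\bar x)$ is independent of $\bar y$, this is possible only if $\mu(\bar x)=0$, whence $\mathrm{div}_{\Sigma}A(\bar x)=0$ and $\langle A,\Rmnum{2}\rangle(\bar x)=0$. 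Applying Step 1 to the antipodal pair $\pm\bar y$ (both admissible) and adding $M(\pm\bar y)=\lambda(\pm\bar y)A(\bar x)^{-1}$ gives $D_{\Sigma}^{2}u(\bar x)=\tfrac12(\lambda(\bar y)+\lambda(-\bar y))A(\bar x)^{-1}>0$, while subtracting gives $\langle\Rmnum{2}(\bar x),\bar y\rangle=\tfrac12(\lambda(-\bar y)-\lambda(\bar y))A(\bar x)^{-1}$. Hence, for each vector $e$ of an orthonormal frame of $T^{\perp}_{\bar x}\Sigma$, the scalar second fundamental form $\Rmnum{2}(\cdot,\cdot,e)(\bar x)$ is a scalar multiple of $A(\bar x)^{-1}$; pairing with $A(\bar x)$ and using $\langle A(\bar x),A(\bar x)^{-1}\rangle=n$ together with $\langle A,\Rmnum{2}\rangle(\bar x)=0$, that scalar must vanish, so $\Rmnum{2}(\bar x)=0$. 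Since $\Omega$ is dense in $\Sigma$ by Lemma 4.1 and $\Rmnum{2}$, $\mathrm{div}_{\Sigma}A$ are continuous, this yields $\Rmnum{2}\equiv0$ and $\mathrm{div}_{\Sigma}A\equiv0$ on all of $\Sigma$.

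Finally, with $\Rmnum{2}\equiv0$ the relations of Step 1 become $D_{\Sigma}^{2}u(\bar x)=\lambda(\bar x)A(\bar x)^{-1}$ and (by Lemma 4.3) $\det D_{\Sigma}^{2}u(\bar x)=(\det A(\bar x))^{\frac{1}{n-1}}$; comparing determinants gives $\lambda(\bar x)^{n}=(\det A(\bar x))^{\frac{n}{n-1}}$, hence $\lambda(\bar x)=(\det A(\bar x))^{\frac{1}{n-1}}=\det D_{\Sigma}^{2}u(\bar x)$. Therefore $A=\det(D_{\Sigma}^{2}u)\,(D_{\Sigma}^{2}u)^{-1}=\mathrm{cof}\,D_{\Sigma}^{2}u$ on $\Omega$, and on $\Sigma$ by continuity; in particular $D_{\Sigma}^{2}u=(\det A)^{\frac{1}{n-1}}A^{-1}$ is smooth and positive definite everywhere. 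Starting from $u\in C^{2,\gamma}(\Sigma)$ and bootstrapping in local coordinates the identity $\partial_{i}\partial_{j}u=\Gamma^{k}_{ij}\partial_{k}u+(\det A)^{\frac{1}{n-1}}(A^{-1})_{ij}$ (smooth right-hand side once $u$ is one order more regular) upgrades $u$ to $C^{\infty}(\Sigma)$.

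The hard part will be the middle paragraph: recognizing that the equality conditions inherited from the proof of Lemma 3.3 must hold \emph{simultaneously for the entire circle} of admissible normal vectors $\bar y$ — this is precisely where codimension $2$ is used, being exactly the case in which the constant of Theorem 1.3 collapses to $n|B^{n}|^{\frac1n}$ and this circle is non-degenerate — and then squeezing $\Rmnum{2}\equiv0$ out of the two facts that $\langle\Rmnum{2}(\bar x),\bar y\rangle$ is proportional to $A(\bar x)^{-1}$ and that $\langle A,\Rmnum{2}\rangle(\bar x)=0$, via $\langle A(\bar x),A(\bar x)^{-1}\rangle=n\neq0$. Everything after that is routine linear algebra together with the standard elliptic bootstrap.
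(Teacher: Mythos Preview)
Your proposal is correct and follows essentially the same route as the paper: both arguments apply Lemma~4.3 at boundary points $(\bar x,\bar y)$ with $|\nabla^{\Sigma}u(\bar x)|^{2}+|\bar y|^{2}=1$, force equality throughout the chain in Lemma~3.3, and exploit that in codimension $2$ the admissible $\bar y$ range over a full circle. The only cosmetic difference is the order of deductions---the paper first pins down the scalar $\nu=(\det A)^{1/(n-1)}$ via determinants so that the antipodal subtraction gives $\langle\Rmnum{2}(\bar x),\bar y\rangle=0$ directly, whereas you first extract $\mu=0$ from the normal-part circle argument and then kill $\Rmnum{2}$ using $\langle A,A^{-1}\rangle=n$; both are clean, and your bootstrap via the explicit Hessian identity is equivalent to the paper's appeal to the simplified Neumann problem.
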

	\begin{proof}
	Given a point $\bar{x}\in \Omega$, since  $\left| \nabla^{\Sigma}u\left( \bar{x}\right) \right|^{2}<1$, we have  $\left| \nabla^{\Sigma}u\left( \bar{x}\right) \right|^{2}+\left| \bar{y}\right| ^{2}=1$ for all $\bar{y}\in\{y\in T_{\bar{x}}^{\perp}\Sigma :\left| y\right| ^{2}=1-\left| \nabla^{\Sigma}u\left( \bar{x}\right) \right|^{2}\}$. By Lemma 4.3, we obtain $D_{\Sigma}^{2}u\left( \bar{x}\right)
	-\left\langle \Rmnum{2}\left( \bar{x}\right), \bar{y}\right\rangle\geq 0$ and
	\begin{equation}
	 \mathrm{det}\left(D_{\Sigma}^{2}u\left( \bar{x}\right)
	-\left\langle \Rmnum{2}\left( \bar{x}\right), \bar{y}\right\rangle\right)=  \left( \mathrm{det}A(\bar{x})\right) ^{\frac{1}{n-1}}.	
	\end{equation}
	Similar to the proof of Lemma 3.3, since $\left| \nabla^{\Sigma}u\left( \bar{x}\right) \right|^{2}+\left| \bar{y}\right| ^{2}=1$, $A(\bar{x})>0$ and $D_{\Sigma}^{2}u\left( \bar{x}\right)
	-\left\langle \Rmnum{2}\left( \bar{x}\right), \bar{y}\right\rangle\geq 0$, we have 
	\begin{equation*}
		\mathrm{det}\left( D_{\Sigma}^{2}u\left( \bar{x}\right)
		-\left\langle \Rmnum{2}\left(\bar{ x}\right), \bar{y}\right\rangle\right)\leq\left( \mathrm{det}A\left( \bar{x}\right)\right) 
		^{\frac{1}{n-1}}
	\end{equation*}
	with the equality holds. Thus there exists  $\mu\geq 0$ and $\nu \geq 0$ such that
\begin{align}
			&\mathrm{div}_{\Sigma}A(\bar{x})=-\mu\nabla^{\Sigma}u\left( \bar{x}\right) ,\\  
			&\left\langle A(\bar{x}), \Rmnum{2}\left( \bar{x}\right)\right\rangle=-\mu y,\\  
			&A\left( \bar{x}\right) \circ\left( D_{\Sigma}^{2}u\left( \bar{x}\right)
			-\left\langle \Rmnum{2}\left( \bar{x}\right), \bar{y}\right\rangle\right)=\nu g_{\Sigma}(\bar{x}).
\end{align}
We conclude that $\nu=\left( \mathrm{det}A(\bar{x})\right) ^{\frac{1}{n-1}}$ and 
		\begin{equation}
 D_{\Sigma}^{2}u\left( \bar{x}\right)
-\left\langle \Rmnum{2}\left( \bar{x}\right), \bar{y}\right\rangle= \left( \mathrm{det}A(\bar{x})\right) ^{\frac{1}{n-1}}A^{-1}(\bar{x})
	\end{equation}
	from (4.3) and (4.6). Replacing $\bar{y}$ by $-\bar{y}$ gives
			\begin{equation}
		D_{\Sigma}^{2}u\left( \bar{x}\right)
		-\left\langle \Rmnum{2}\left( \bar{x}\right), -\bar{y}\right\rangle= \left( \mathrm{det}A(\bar{x})\right) ^{\frac{1}{n-1}}A^{-1}(\bar{x}).
	\end{equation}
	Combining (4.7) and (4.8), we obtain $D_{\Sigma}^{2}u\left( \bar{x}\right)
	= \left( \mathrm{det}A(\bar{x})\right) ^{\frac{1}{n-1}}A^{-1}(\bar{x})$ and $\left\langle \Rmnum{2}\left( \bar{x}\right), \bar{y}\right\rangle=0$ for all $\bar{x}\in \Omega$ and $\bar{y}\in\{y\in T_{\bar{x}}^{\perp}\Sigma :\left| y\right| ^{2}=1-\left| \nabla^{\Sigma}u\left( \bar{x}\right) \right|^{2} \}$. Due to the range of $\bar{y}$ and $u\in C^{2}(\Sigma)$, by Lemma 4.1, we have 
			\begin{equation}	
 D_{\Sigma}^{2}u\left( x\right)
= \left( \mathrm{det}A(x)\right) ^{\frac{1}{n-1}}A^{-1}(x)>0
		\end{equation}
and 	
				\begin{equation}	
	\Rmnum{2}\left( x\right)\equiv0 
			\end{equation}
	for all $x\in \Sigma$. By substituting (4.10) into (4.3), by Lemma 4.1, we obtain
			\begin{equation*}		
\mathrm{det}\left(D_{\Sigma}^{2}u\left( x\right)
	\right)=  \left( \mathrm{det}A(x)\right) ^{\frac{1}{n-1}}  
				\end{equation*}
for all $x\in \Sigma$.	By substituting (4.10) into (4.5), we obtain $\mu=0$ which concludes 
				\begin{equation}	
\mathrm{div}_{\Sigma}A(x)=0
\end{equation}
	for all $x\in \Sigma$ from (4.4) and Lemma 4.1. Combining (4.3), (4.9) and (4.10), we obtain 
					\begin{equation*}	
A=\mathrm{cof} D^{2}_{\Sigma}u.
	\end{equation*}
	By substituting (4.10) and (4.11) into (3.2), we obtain
		\begin{equation*}  
		\left\{\begin{aligned}
			&\mathrm{div}_{\Sigma}\left( A\left( \nabla^{\Sigma}u\right) \right) \left( x\right) 
			=n\left( \mathrm{det}A\left( x\right) \right) ^{\frac{1}{n-1}}
			,\ \mathrm{in}\  \Sigma\backslash\partial\Sigma,  \\
			&\left\langle A\left( \nabla^{\Sigma}u\right) \left( x\right),\nu\left( x\right) \right\rangle =\left| A\left( \nu\left( x\right)\right) \right|,\ \mathrm{on}\  \partial\Sigma.
		\end{aligned} \right.
	\end{equation*}
Since $A$ is smooth, standard elliptic regularity theory implies $u\in C^{\infty}(\Sigma)$.
These complete the proof.
		\end{proof}
 By Lemma 4.1, $\nabla^{\Sigma}u$ maps $\Sigma$ into the closure of $B^{m}\subset \mathbb{R}^{n}$ and $\nabla^{\Sigma}u(\Omega)=B^{m}$. Since $D^{2}_{\Sigma}u>0$, $\nabla^{\Sigma}u$ is a local homeomorphism. We conclude that
\begin{lemma}
$\nabla^{\Sigma}u(\Sigma)=\overline{B^{m}}$.
\end{lemma}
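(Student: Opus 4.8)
\emph{Proof proposal.} The plan is to obtain the lemma as a short topological consequence of the two facts recorded in the paragraph just above it---that $\nabla^{\Sigma}u$ maps $\Sigma$ into $\overline{B^{m}}$ and that $\nabla^{\Sigma}u(\Omega)=B^{m}$---together with the compactness of $\Sigma$.

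First I would note that $\nabla^{\Sigma}u$ is continuous on $\Sigma$ (by Lemma 4.5 one in fact has $u\in C^{\infty}(\Sigma)$), so $\nabla^{\Sigma}u(\Sigma)$ is compact, hence closed. Since $\Omega\subseteq\Sigma$, we have $B^{m}=\nabla^{\Sigma}u(\Omega)\subseteq\nabla^{\Sigma}u(\Sigma)$; a closed set containing $B^{m}$ must contain its closure, so $\overline{B^{m}}\subseteq\nabla^{\Sigma}u(\Sigma)$. Conversely $\nabla^{\Sigma}u(\Sigma)\subseteq\overline{B^{m}}$ by the first recorded fact, equivalently because $|\nabla^{\Sigma}u(x)|\le 1$ for all $x\in\Sigma$ (Lemma 4.1). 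The two inclusions give $\nabla^{\Sigma}u(\Sigma)=\overline{B^{m}}$. For consistency I would also observe that $\partial\Sigma\neq\varnothing$ in this equality situation: otherwise $\Omega=\Sigma$ and $\nabla^{\Sigma}u(\Sigma)=B^{m}$ would be simultaneously compact (a continuous image of a compact manifold) and a nonempty open ball, which is absurd; equivalently, $D^{2}_{\Sigma}u>0$ cannot hold on a closed manifold. Hence $\Omega=\Sigma\setminus\partial\Sigma$ is a proper dense open subset, and the points of $\overline{B^{m}}\setminus B^{m}$ are images of points of $\partial\Sigma$.

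I do not expect a genuine obstacle at this step, since the substance has already been carried out in the earlier lemmas---principally Lemma 4.5, which yields $\Rmnum{2}\equiv 0$ (so $\Sigma$ is totally geodesic, hence sits in a fixed $n$-plane of $\mathbb{R}^{n+2}$) and $D^{2}_{\Sigma}u>0$---and in the surjectivity assertion $\nabla^{\Sigma}u(\Omega)=B^{m}$. The latter comes from Lemma 2.1, $\Phi(V)=B^{n+m}$, once one notes that in the now-flat situation $\Phi(x,y)=\nabla^{\Sigma}u(x)+y$ splits into its tangential part $\nabla^{\Sigma}u(x)$ and its normal part $y\in T^{\perp}_{x}\Sigma$, with $V=U$ because $D^{2}_{\Sigma}u>0$. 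If one wanted the present lemma fully self-contained, one would re-derive that surjectivity: for $|p|<1$ the point $p$ (with vanishing normal component) lies in $B^{n+m}=\Phi(V)$, forcing $p=\nabla^{\Sigma}u(x)$ for some $x\in\Omega$, while the reverse inclusion is immediate from Lemma 4.1. Given what is already proved, however, the closed-set argument of the previous paragraph finishes the lemma directly.
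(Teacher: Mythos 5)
Your argument is correct and matches the paper's (essentially unwritten) justification: the paper likewise deduces the lemma from $\nabla^{\Sigma}u(\Sigma)\subseteq\overline{B^{n}}$ and $\nabla^{\Sigma}u(\Omega)=B^{n}$, and your compactness/closure step is the natural way to finish (note the ball here should be the unit ball of $\mathbb{R}^{n}$, the common tangent plane, not $B^{m}$ --- a typo in the paper that you implicitly correct). Your backup derivation of the surjectivity $\nabla^{\Sigma}u(\Omega)=B^{n}$ from Lemma 3.1 in the totally geodesic situation is also sound and arguably more direct than the paper's appeal to $\nabla^{\Sigma}u$ being a local homeomorphism.
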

\emph{Proof of Theorem 1.4.}
The necessity follows from Lemma 4.4 and Lemma 4.5. The sufficiency follows from Theorem 1.2.

	\hfill$\Box$\\

\end{document}